\documentclass[11pt,a4paper]{article}
\usepackage[T1]{fontenc}
\usepackage{lmodern}
\usepackage[margin=28mm]{geometry}
\usepackage{amsmath,amssymb,amsthm}
\usepackage{microtype}
\usepackage{xurl}
\usepackage[colorlinks=true,linkcolor=blue,citecolor=blue,urlcolor=blue]{hyperref}
\newtheorem{theorem}{Theorem}[section]
\newtheorem{proposition}[theorem]{Proposition}
\newtheorem{lemma}[theorem]{Lemma}
\theoremstyle{remark}
\newtheorem{remark}[theorem]{Remark}
\numberwithin{equation}{section}
\allowdisplaybreaks[1]
\hypersetup{pdftitle={An explicit elementary proof of no Landau--Siegel zeros},pdfauthor={Victor Zhenyu Guo},pdfsubject={An explicit elementary proof using integer counting, paired character tails, and weighted positivity}}

\title{An explicit elementary proof of no Landau--Siegel zeros}
\author{Victor Zhenyu Guo\\[0.5em]
{\small School of Mathematics and Statistics, Xi'an Jiaotong University}\\
{\small Xi'an, Shaanxi, China.}\\
{\small \href{mailto:vzguo@foxmail.com}{\texttt{vzguo@foxmail.com}};
\href{mailto:guozyv@xjtu.edu.cn}{\texttt{guozyv@xjtu.edu.cn}}}}
\date{}

\begin{document}
\maketitle

\begin{abstract}
We give an elementary proof of the absence of Landau--Siegel zeros
with the explicit constant $c=1/100$. For every primitive quadratic
Dirichlet character $\chi$ of conductor $q\ge3$, each real zero
$\beta\in(0,1)$ of $L(s,\chi)$ satisfies $(1-\beta)\log q>c$.
The proof combines integer coefficient counting, polynomial
congruences, a paired-tail hyperbola estimate, and a logarithmically
weighted comparison of positive sums. All required constants are
explicit, and one choice of parameters covers every conductor.
The argument uses elementary arithmetic and real summation and
integration, without separate zero computations for small conductors.
\end{abstract}

\noindent\textbf{2020 Mathematics Subject Classification.}
Primary 11M20; Secondary 11N05.

\smallskip
\noindent\textbf{Keywords.}
Landau--Siegel zeros, quadratic characters, Frobenius congruences,
elementary methods, explicit estimates.

\section{Introduction}
\label{sec:introduction}

Our aim is to prove a Landau--Siegel zero gap by elementary methods
while retaining a fully explicit numerical constant. We obtain
$(1-\beta)\log q>c$ with $c=1/100$ in Theorem~\ref{thm:main},
using a single choice of parameters for every conductor $q\ge3$.
All estimates needed for this value of $c$ have specified constants
and remainders, including the estimates for small conductors.
The proof uses integer and polynomial arithmetic, partial summation,
and real integration.

The argument follows the Frobenius method of \cite{OpenAI2026}
and its subsequent variants
\cite{Banks2026,Kerr2026,Green2026,vanDijk2026,Stepanov2026}.
For an exposition of the polynomial-method framework, see also
Tao~\cite{Tao2026}.
The arithmetic part combines polynomial congruences with integer
coefficient counting; injectivity follows by counting relatively
prime quadratic factors modulo a prime. The real-variable part
pairs character tails before estimating them and then compares
logarithmically weighted positive sums.

Throughout, $\chi$ denotes a primitive quadratic Dirichlet character
of conductor $q\ge3$, unless otherwise stated; see
\cite[Sections~6.5, 6.8, and~8.7--8.9]{Apostol}.

The letter $p$ denotes a prime, logarithms are natural, and
\[
 S_-(x,\chi)=\sum_{\substack{p\le x\\\chi(p)=-1}}\frac{\log p}{p}.
\]
For $s>0$, we use
\[
 L(s,\chi)=\lim_{x\to\infty}\sum_{n\le x}\frac{\chi(n)}{n^s}
 \qquad(s>0).
\]
The series converges by bounded character sums and partial
summation \cite[Theorem~6.17]{Apostol}. The condition
$L(\beta,\chi)=0$ below refers to this series.

\begin{theorem}[An explicit real zero gap]
\label{thm:main}
Let $\chi$ be a primitive quadratic Dirichlet character of
conductor $q\ge3$. Every real zero $\beta\in(0,1)$ of $L(s,\chi)$
satisfies
\begin{equation}\label{eq:main-gap}
 (1-\beta)\log q>\frac1{100}.
\end{equation}
\end{theorem}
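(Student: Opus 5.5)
The proof runs by contradiction. Assume $L(\beta,\chi)=0$ with $(1-\beta)\log q\le 1/100$. The first goal is to turn this into the statement that $\chi(p)=-1$ for most primes in a long range. The real-variable input uses $\lambda=1*\chi\ge0$, which is supported mostly on integers whose prime factors have $\chi(p)=1$. I would study the weighted sum
\[
 \sum_{n\le x}\frac{\lambda(n)}{n^{\beta}}
\]
with the hyperbola method. The vanishing of $L(\beta,\chi)$ removes the main term $L(\beta,\chi)\,x^{1-\beta}/(1-\beta)$. What remains is essentially $L(1,\chi)\,x^{1-\beta}$-type terms together with paired tails $\sum_{m>y}\chi(m)m^{-\beta}$. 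I would estimate the tails in pairs, by partial summation against the Pólya--Vinogradov-free bound $|\sum_{m\le t}\chi(m)|\le q$, so that every constant stays explicit. The outcome should be an upper bound showing that the positive sum is abnormally small, of size about $(1-\beta)\log x$ times elementary quantities, for $x$ in a range $q^{A}\le x\le q^{B}$.

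The next step compares this with a lower bound. Weighting by $\log$ and comparing positive sums, I would translate the smallness of $\lambda$ into the bound
\[
 S_+(x)-S_+(q)\ll (1-\beta)\log x\cdot\log x + O(1).
\]
This says that primes with $\chi(p)=1$ are rare. Equivalently, $S_-(x,\chi)\ge \log x-\log q-C$ up to an explicit error of order $(1-\beta)\log^2 x$, which I would check via Mertens-type explicit bounds obtained elementarily from $\log n!$ and Chebyshev.

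The arithmetic contradiction comes from the Frobenius method of \cite{OpenAI2026}. For a prime $p$ with $\chi(p)=-1$, Frobenius acts on $\mathbb{Z}[\sqrt{D}]/p$ as conjugation, giving the congruence $(a+b\sqrt D)^p\equiv a-b\sqrt D \pmod p$. I would take polynomials $f(X)=\prod(X-\alpha_i)$ built from integer coefficient choices of bounded height, and count the distinct residues of $f$ modulo products of many inert primes. Injectivity modulo a prime $p$ follows by counting relatively prime quadratic factors over $\mathbb{F}_p$. Each inert prime forces an extra coincidence, so the abundance of such primes, via the lower bound on $S_-$, produces more than the available number of integer coefficient vectors. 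That is the contradiction.

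The main obstacle is making a single parameter choice ($A$, $B$, the polynomial degree and height) valid uniformly for all $q\ge3$. In particular, small $q$ must be handled without numerics, so every error term must be fully explicit. I would first try $x=q^{B}$ with $B$ large, say near $20$, and push the constant $1/100$ through the comparison inequality, adjusting $B$ until the counting margin stays positive even at $q=3$.
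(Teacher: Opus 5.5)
Your outline has the same architecture as the paper: a paired-tail hyperbola estimate at the zero, a positivity comparison giving a Mertens-type lower bound for $S_-(x,\chi)$, and a Frobenius counting bound in the other direction. There is a genuine gap, though. The arithmetic step, where the constant is decided, is not a construction. You do not say what space is counted, which map is injective, or why an inert prime $p$ should cut the image by a factor of size about $p^{c s_0p_0^2/p}$, which is what makes the total deficiency proportional to $S_-$. A product $\prod(X-\alpha_i)$ with ``one extra coincidence per inert prime'' does not do this.

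What the final comparison needs is an unconditional, explicit bound $S_-(x,\chi)\le c_1\log x+c_2\log q+C$ with $c_1$ well below $1$; the paper proves $\frac{11}{20}\log x+\frac{29}{60}\log q+6$. It uses two-variable polynomials $f(z,w)$ with $\deg_z<p_0$ and $\deg_w<s_0=\lfloor p_0/30\rfloor$. These are evaluated, with all $z$-derivatives of order $<s_0$, at the conjugate pairs $(u+vt,u-vt)$ modulo $t^2-d$ on a grid of side $\lceil\sqrt{p_0}\rceil$. Injectivity comes from the largest inert prime $p_0\le x$ (with $p_0\ge4q$), via $\xi^{p_0}\equiv2u-\xi$ modulo each $(\xi-u)^2-dv^2$ and a degree count. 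The $p$-power divisibility comes from the triangular basis $z^{r-ph}w^s(z^p-w)^h$ with $h=\min(\lfloor r/p\rfloor,s_0-1-s)$, giving $2H_p\approx s_0p_0^2/p$. Box counting with heights $(p_0/b_0)^{s_0-1}b_0^{r+s}$, $b_0=2\sqrt{p_0q}$, then gives $c_1=\frac12+\frac1{20}$.

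On the real-variable side, you never say how the unknown $L(1,\chi)/\varepsilon$ in the main term $\frac{L(1,\chi)}{\varepsilon}t^{\varepsilon}$ is eliminated. The paper combines two steps. First, $H_\beta(y)\le y^{\varepsilon}H_1(y)$ gives $L(1,\chi)/\varepsilon\le H_1(y)+B(y)$. Second, the injection $(n,p)\mapsto np$ for $n\le y<p\le x$ gives $H_1(y)J\le N(xy)-N(y)=L(1,\chi)\kappa+R$. The unknown $H_1(y)$ then cancels; bounding $J\le N(x)-N(y)$ directly leaves it uncontrolled.

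Your lower bound losing only $\log q$ is also not available. The hyperbola remainder has size $q\,y^{-1/2}$, so the cut must be at least about $q^2$; the paper takes $y=q^6$ and loses $6\log q$. With $x=q^\alpha$ and $y=q^\gamma$, one then needs $(1-c_1)\alpha>\gamma+c_2+\frac{\delta}{2}(\alpha\gamma+\frac{\alpha^2}{2})$, plus error terms and additive constants totalling $8$. With the paper's constants, at $\delta=\frac1{100}$ and $q=3$ this holds only for $\alpha$ in a narrow window around $70$; at $\alpha=20$ the margin is below $\log q$.

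The final margin at $q=3$ is only $\frac{451}{60}\log3-8\approx0.26$. With the paper's other constants, your character-sum bound $q$ in place of $q/2$ already destroys it. The paper gets $q/2$ by removing complete periods and comparing the leftover block with its complement.
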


\begin{remark}[Choice of the constant]
\label{rem:constant}
The same elementary argument also yields $c=1/64$; a verification is
given in Appendix~\ref{app:constant}.
For $q\ge7$, take $x=q^{46}$ and $y=q^4$ in
Proposition~\ref{arith:main} and Lemma~\ref{real:large-prime}.
For $q=3,4,5$, grouping the character series into positive blocks
gives $L(s,\chi)>0$ for $s>0$, and no primitive character has
conductor $6$.
We retain $c=1/100$ so that a single choice of parameters covers
all conductors and keeps the numerical verification short.
\end{remark}

\subsection{Earlier results and the scope of the theorem}

Banks \cite[Theorem~1.1]{Banks2026} gives the constant $0.003$ for
$q\ge10^{10}$, and for all $q\ge3$ when combined with the
small-conductor zero computations cited there. Kimura
\cite[Theorem~1 and its proof]{Kimura2026} gives $0.0005811$ for all
$q\ge3$, $0.005965$ for $\log q\ge35.0131$, and $0.01015$ for
$\log q\ge10000$. Thus $1/100$ is larger than their stated
all-conductor constants, while Kimura's last bound is larger in its
range. Our proof uses a single choice of parameters for all $q\ge3$
and includes its numerical comparisons as rational inequalities.

The quasi-Riemann result of
\cite[Theorem~1.1]{OpenAIQRH2026} states that every Dirichlet
$L$-function is zero-free in $\operatorname{Re}s>7/8$, with the
principal-character pole at $s=1$ excepted. That result already implies
the stronger inequality
\[
 (1-\beta)\log q\ge\frac18\log q
 \ge\frac18\log3>\frac1{100}.
\]
Theorem~\ref{thm:main} quantifies the elementary Frobenius argument;
the stronger half-plane theorem is not an input.

The arithmetic coefficients obtained here are not optimal.
Green \cite[Theorem~1.1]{Green2026} obtains a leading coefficient
$1/2+O(1/\alpha)$ at $x=q^\alpha$ for sufficiently large even integers
$\alpha$, when $q$ is sufficiently large in terms of $\alpha$.
The stronger estimate attributed to Barreto in
\cite[Theorem~1]{Stepanov2026}, and also proved in
\cite[Theorem~1]{vanDijk2026}, is
\[
 S_-(x,\chi)\le
 \frac12\log x+\frac14\log q+\log\log(3x)+O(1),
\]
with an effective absolute implied constant. Our estimate below
instead specifies every coefficient and the absolute remainder,
and includes all small-parameter cases.

\subsection{Proof outline}

The arithmetic input is Proposition~\ref{arith:main}, which proves
\begin{equation}\label{eq:intro-arithmetic}
 S_-(x,\chi)\le
 \frac{11}{20}\log x+\frac{29}{60}\log q+6
 \qquad(x\ge2).
\end{equation}
For the parameters in \eqref{arith:parameters} and an auxiliary prime
${p_0} $ with $\chi({p_0} )=-1$, Lemma~\ref{arith:injectivity} proves that the
evaluation map \eqref{arith:evaluation} is injective. If $0\ne f$ were
in its kernel, divide its coefficients by their largest common power
of ${p_0} $. The resulting reduction
$g(t)=f(t,t^{p_0})\pmod{p_0}$ is nonzero. The pairwise relatively prime
quadratic factors then give the contradiction
\[
 \begin{gathered}
 \prod_{\substack{0\le u<m\\1\le v<m}}
       \bigl((t-u)^2-dv^2\bigr)^{s_0}
       \mid g(t)\pmod{p_0},\\
 2s_0m(m-1)\le\deg g<{p_0}s_0<2s_0m(m-1).
 \end{gathered}
\]
The adapted integer bases
\eqref{arith:adapted-basis} supply the prime-power divisibility
\eqref{arith:divisibility}. Lemma~\ref{arith:counting} then gives the
upper bound \eqref{arith:H-upper}; comparison with
\eqref{arith:H-lower}, followed by the remainder estimate
\eqref{arith:remainder-six} and the small cases in the proof of
Proposition~\ref{arith:main}, yields \eqref{eq:intro-arithmetic}.

For the real-variable part, put
\[
 a(n)=(1*\chi)(n),\qquad
 H_\beta(t)=\sum_{n\le t}\frac{a(n)}{n^\beta}.
\]
At a real zero $1/2\le\beta<1$, Proposition~\ref{real:hyperbola} gives
\begin{equation}\label{eq:intro-hyperbola}
 H_\beta(t)=\frac{L(1,\chi)}{1-\beta}t^{1-\beta}+E(t),
 \qquad
 |E(t)|\le\left(q+1+\frac12\log t\right)t^{1/2-\beta}.
\end{equation}
Estimating the two tails in \eqref{real:paired-tails} jointly gives
\eqref{real:paired-error}, avoiding the factor $(1-\beta)^{-1}$ that
would arise from estimating them separately.

Finally, Lemma~\ref{real:large-prime} converts this estimate into a
prime-sum comparison.
Writing $H_1(y)=\sum_{n\le y}a(n)/n$ and
$N(t)=\sum_{n\le t}a(n)\log n/n$, the injection
$(n,p)\mapsto np$ for $n\le y<p\le x$ gives
\eqref{real:weighted-injection}, namely
\[
 H_1(y)\sum_{y<p\le x}\frac{a(p)\log p}{p}
 \le N(xy)-N(y).
\]
The partial-summation identity \eqref{real:weighted-abel} and
positivity eliminate $L(1,\chi)/(1-\beta)$ from the comparison,
giving \eqref{real:prime-mass}--\eqref{real:comparison-error}.
With the fixed choices $y=q^6$ and $x=q^{70}$ in
\eqref{proof:parameters}, the assumption
$(1-\beta)\log q\le1/100$ yields the incompatible bounds
\eqref{proof:lower-bound} and \eqref{proof:upper-bound}.
The final numerical contradiction is \eqref{proof:final-certificate}.

\subsection{Sources, refinements, and elementary scope}

Coefficient counting follows Kerr
\cite[Lemmas~2--5]{Kerr2026}; the triangular basis and its derivative
divisibility follow Green \cite[Lemma~5.1]{Green2026}.
The auxiliary-prime nonvanishing argument is the one in
\cite[proof of Theorem~1, Nonvanishing]{vanDijk2026}; see also
the Stepanov-type formulation in \cite[Lemma~4]{Stepanov2026}.
We express the latter argument through relatively prime quadratic
factors and polynomial division modulo a prime. Thus the entire
arithmetic proof can be carried out with integer coefficients.

The positive-convolution method is classical; see
\cite[Exercise~58]{Tao2014} and \cite[Section~1, Lemma~5]{Tao2015}.
The harmonic normalization $H_1$ and the single-large-prime injection
already appear in \cite[proof of Proposition~3.5]{TaoTeravainen2022}.
Bounds linear in $1-\beta$ are also standard; for example,
\cite[Lemma~1]{Kerr2026} bounds the split-prime sum by a constant
multiple of $\log q+(1-\beta)(\log x)^2$.
Our quantitative refinements are
\eqref{eq:intro-arithmetic}, the uniform remainder in
Proposition~\ref{real:hyperbola}, and the complete error term in
Lemma~\ref{real:large-prime}. These give the stated constant without
a preliminary lower bound for $L(1,\chi)$ or an estimate for
$L'/L(1,\chi)$. The underlying counting, Frobenius, and
positive-convolution mechanisms are established methods.

The prerequisites are the Chinese remainder theorem, quadratic
reciprocity, polynomial arithmetic modulo a prime, and real
summation and integration. Standard facts are cited in the relevant
places. The proof uses no complex zero formula, analytic continuation,
or functional equation. Proofs are retained for the specific estimates
and constants used in \eqref{eq:main-gap}.

\section{Standard facts and explicit prime estimates}
\label{sec:preliminaries}

We use Euler summation, partial summation, and the Dirichlet hyperbola
identity in their standard forms
\cite[Theorems~3.1, 4.2, and~3.17]{Apostol}.
For the Chinese remainder theorem, polynomial congruences, and
quadratic reciprocity, see \cite[Sections~5.4--5.7 and Chapter~9]{Apostol}.

\paragraph{Quadratic characters.}
The standard classification of primitive quadratic characters
\cite[Section~1.4, Theorem~1.1]{Goldmakher} gives a fundamental
discriminant ${d_\chi}$ with $|{d_\chi}|=q$ and $\chi=({d_\chi}/\cdot)$.
In arithmetic terms, there is a squarefree integer $d\ne1$ such that
\[
 {d_\chi}=
 \begin{cases}
 d,&d\equiv1\pmod4,\\
 4d,&d\equiv2,3\pmod4.
 \end{cases}
\]
Hence $|d|\le q$ and
\begin{equation}\label{prelim:quadratic-reduction}
 \chi(p)=\left(\frac dp\right)
 \qquad\text{for every odd prime }p.
\end{equation}
Only this elementary consequence of the classification is needed.

\paragraph{Numerical bounds.}
We will use the elementary inequalities
\begin{equation}\label{prelim:numerical}
 \begin{gathered}
 \frac{13}{12}<\log3<\frac{11}{10},\qquad
 \log2<\frac7{10},\qquad \log5<\frac{13}{8},\\
 \frac{35}{18}<\log7<\frac{39}{20},\qquad \log100<5.
 \end{gathered}
\end{equation}
For example, these bounds follow from the real power series
\[
 \log x=2\sum_{j=0}^{m-1}\frac{z^{2j+1}}{2j+1}+R_m,\qquad
 0<R_m<\frac{2z^{2m+1}}{(2m+1)(1-z^2)},\qquad
 z=\frac{x-1}{x+1}\quad(x>1).
\]
The choices $(x,m)=(2,1),(3,2),(5,3),(7,9)$ suffice;
$\log100=2\log2+2\log5<5$.
Let
\[
 \vartheta(x)=\sum_{p\le x}\log p,\qquad
 \psi(x)=\sum_{n\le x}\Lambda(n),\qquad
 M(x)=\sum_{p\le x}\frac{\log p}{p},
\]
where $\Lambda$ is the von Mangoldt function. The standard
Chebyshev and Mertens estimates are recorded with the constants
needed below; compare \cite[Section~4.5]{Apostol} and
\cite[Lemma~5(a),(b)]{Kimura2026}. We include only the calculations
that fix the constants in the forms used here.

\begin{lemma}\label{prelim:prime-estimates}
For $x\ge2$,
\begin{align}
 \vartheta(x)\le\psi(x)&\le(\log4)x,
 \label{prelim:chebyshev-bound}\\
 M(x)&\le\log x+\log4.
 \label{prelim:prime-upper-bound}
\end{align}
Moreover,
\begin{equation}\label{prelim:odd-square-bound}
 \sum_{\substack{p\ {\rm prime}\\p\ {\rm odd}}}
       \frac{\log p}{p^2}<\frac7{20},
\end{equation}
and, for $x\ge y\ge100$,
\begin{equation}\label{prelim:mertens-interval-bound}
 M(x)-M(y)>\log(x/y)-2.
\end{equation}
\end{lemma}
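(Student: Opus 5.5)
We prove the four estimates in order, using only the central binomial coefficient, Legendre's formula for $\log n!$, and Euler summation.

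\emph{Chebyshev bound \eqref{prelim:chebyshev-bound}.} The inequality $\vartheta\le\psi$ is immediate. For the upper bound, the classical route is the identity
\[
 \log\operatorname{lcm}(1,\dots,n)=\psi(n).
\]
Combined with the integral $\int_0^1 t^k(1-t)^k\,dt$, this gives $\operatorname{lcm}(1,\dots,2k+1)\le 4^{k}\cdot$(small factor). For a clean constant we instead use the alternative argument: $\binom{2k+1}{k}\le 4^k$, and this coefficient is divisible by every prime in $(k+1,2k+1]$. Hence
\[
 \vartheta(2k+1)-\vartheta(k+1)\le k\log4,
\]
and induction gives $\vartheta(n)\le n\log4$. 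To reach $\psi$, we need $\psi(x)=\sum_{j}\vartheta(x^{1/j})$. This is where $(\log4)x$ is tight-ish. The cleanest route is the lcm bound $\operatorname{lcm}(1,\dots,n)\le 4^{n-1}$, i.e.\ $\psi(n)\le (n-1)\log4$, which holds for $n\ge7$ by Hanson/Nair. Nair's argument is elementary: it uses
\[
 \operatorname{lcm}(1,\dots,2k+1)\,k\binom{2k}{k}\ \Big|\ \text{rather }\ \operatorname{lcm}(1..2k+1)\cdot\!\int_0^1 t^k(1-t)^k\,dt\in\mathbb Z
\]
together with $\int\le 4^{-k}$, and gives $\psi(2k+1)\le(2k+1)\log 4$ up to a factor $k\binom{2k}{k}\ge 4^k$. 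We verify directly the finitely many $x<7$ (or the noninteger $x$, handled via $\psi(x)=\psi(\lfloor x\rfloor)$).

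\emph{Mertens upper bound \eqref{prelim:prime-upper-bound}.} Write $n=\lfloor x\rfloor$ and use Legendre's formula
\[
 \log n!=\sum_{p^a\le n}\lfloor n/p^a\rfloor\log p\ \ge\ \sum_{p\le n}(n/p-1)\log p.
\]
This gives
\[
 nM(x)\le \log n!+\vartheta(n)\le n\log n-n+1+\log n+n\log4,
\]
using Stirling-type bounds from Euler summation, namely $\log n!\le n\log n-n+1+\log n$. Dividing by $n$ and absorbing the terms $(1+\log n)/n-1<0$ yields $M(x)\le\log x+\log4$.

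\emph{Odd-square bound \eqref{prelim:odd-square-bound}.} Sum $p=3,5,7,11$ explicitly using \eqref{prelim:numerical}, giving about $0.122+0.064+0.040+0.020$. Bound the tail $\sum_{n\ge13}\log n/n^2$ by the integral $\int_{12}^\infty \log t/t^2\,dt=(1+\log12)/12$, which is roughly $0.29$. This is too big, so we refine: restrict the tail to odd $n$ (halving the integral) or compute more primes explicitly. The total is roughly $0.27<0.35$, and it must be certified with rational bounds.

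\emph{Mertens lower bound \eqref{prelim:mertens-interval-bound}.} This is the main obstacle, since it needs a \emph{lower} bound with an explicit constant valid from $y=100$. We take the exact relation
\[
 \log n!=\sum_{d\le n}\Lambda(d)\lfloor n/d\rfloor,
\]
and from it
\[
 \sum_{d\le n}\Lambda(d)/d\ \ge\ \frac{\log n!}{n}\ \ge\ \log n-1,
\]
while also
\[
 \sum_{d\le n}\Lambda(d)/d\le\frac{\log n!+\psi(n)}{n}\le\log n+\log4-1+\frac{1+\log n}{n}.
\]
Prime powers contribute at most \eqref{prelim:odd-square-bound} plus the $p=2$ part, roughly $0.35+0.7$. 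This gives
\[
 M(x)\ge \log x-1-\text{(prime power part)}-O(1/x),
\]
\[
 M(y)\le\log y+\log4-1+\text{small}.
\]
Subtracting leaves a deficit of about $\log4+1.05>2$, which is too weak. So we work directly with $M$: derive two-sided bounds for $M(t)-\log t$ within an interval of width less than $2$. Using $\sum\Lambda(d)/d=\log n-1+\theta\,(\psi(n)/n+\ldots)$, refined with the sharper bound $\psi(n)\le 1.04n$ (not yet available) or with $\psi(n)/n\le\log4$ together with the exact subtraction of $\sum_{p^a,a\ge2}$, we expect the two-sided window for $\sum_{d\le n}\Lambda(d)/d-\log n$ to be $[-1-\epsilon,\ \log4-1+\epsilon]$, of width about $1.39+\epsilon$. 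The prime-power correction $\sum_{a\ge2}\log p/p^a$ lies in $[0,\approx0.76]$ and is monotone, so subtracting it at $x$ and adding it at $y$ costs at most the full tail. Altogether the width is about $1.39+0.76-\ldots$, and we must check carefully that it stays below $2$ for $y\ge100$, using $(1+\log n)/n\le 0.06$ and the difference of the prime-power tails between $y$ and $x$ (at most the tail beyond $100$, which is tiny apart from $p\le 10$, whose terms cancel). Getting this to close with rational inequalities is the delicate step.
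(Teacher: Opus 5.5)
There are two genuine gaps, and the last estimate is left unfinished.

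\textbf{The bound on $\psi$.} In \eqref{prelim:chebyshev-bound} you only establish $\vartheta(n)\le n\log4$, and neither route you offer reaches $\psi$. Combining $\psi(x)=\sum_j\vartheta(x^{1/j})$ with your $\vartheta$ bound overshoots $(\log4)x$. The integral argument you attribute to Nair runs the wrong way. Since $\int_0^1t^k(1-t)^k\,dt=1/\bigl((2k+1)\binom{2k}{k}\bigr)$ and $\operatorname{lcm}(1,\dots,2k+1)$ times this integral is a positive integer, it yields the \emph{lower} bound $\operatorname{lcm}(1,\dots,2k+1)\ge(2k+1)\binom{2k}{k}$. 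That is Nair's $\operatorname{lcm}(1,\dots,n)\ge2^n$ for $n\ge7$, and it gives no upper bound for $\psi$. The lcm bound you quote is true, but this is not a proof of it.

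The missing observation is that your binomial argument already handles prime powers. Each prime has at most one power in $(m,2m]$ (resp.\ $(m+1,2m+1]$). In Legendre's formula for the exponent of $p$ in $\binom{2m}{m}$ (resp.\ $\binom{2m+1}{m}$), the term belonging to that power equals $1$, and all terms are nonnegative. Hence $\psi(2m)-\psi(m)\le\log\binom{2m}{m}\le2m\log2$ and $\psi(2m+1)-\psi(m+1)\le\log\binom{2m+1}{m}\le2m\log2$. Induction then gives $\psi(n)\le n\log4$; this is the paper's proof. The point matters later, because $\psi(n)\le n\log4$ is exactly what you use for the upper bound on $\sum_{d\le n}\Lambda(d)/d$ in the last part.

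\textbf{The odd-square bound.} This fails numerically as proposed. Here $\sum_{3\le p\le11}(\log p)/p^2\approx0.246$, and the odd-$n$ tail beyond $11$ is about $\frac12\cdot\frac{1+\log12}{12}\approx0.145$. So you get about $0.39>\frac7{20}$; your ``roughly $0.27$'' is an arithmetic slip. With only the parity restriction you would have to list every odd prime up to about $50$. The paper also removes multiples of $3$: primes above $7$ lie in $11+6j$ or $13+6j$. Convexity of $(\log t)/t^2$ for $t\ge3$ then bounds each term by its average over a centered interval of length $6$. This gives $\frac{\log3}9+\frac{\log5}{25}+\frac{\log7}{49}+\frac16\bigl(\frac{1+\log8}8+\frac{1+\log10}{10}\bigr)<\frac{30607}{88200}$.

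\textbf{The remaining parts.} Your Mertens upper bound is fine and matches the paper. For \eqref{prelim:mertens-interval-bound} your setup is the paper's and the plan is viable, but you leave it uncertified. The paper closes it crudely. Let $W(t)$ be the prime-power part of $\sum_{r\le t}\Lambda(r)/r$. The paper uses $W(x)\le\sum_p\frac{\log p}{p(p-1)}<\frac{33}{40}$ and $W(y)\ge\frac{\log2}4+\frac{\log3}9$ (from $4,9\le y$), which makes the total loss less than $2-\frac7{675}$. Your sharper observation is that $W(x)-W(y)$ involves only prime powers above $100$, about $0.14$ in total. That would bring the loss near $1.6$, but the tail still needs a rational bound. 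Also keep the term $1/n$ in $\sum_{d\le n}\Lambda(d)/d\ge\log n-1+\frac1n$. It absorbs $\log x-\log\lfloor x\rfloor$ when $x$ is not an integer.
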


\begin{proof}
The binomial-coefficient argument for Chebyshev's estimate
(cf.\ \cite[Section~4.5]{Apostol}) gives
\[
 \begin{aligned}
 \psi(2m)-\psi(m)
 &\le\log\binom{2m}{m}\le2m\log2,\\
 \psi(2m+1)-\psi(m+1)
 &\le\log\binom{2m+1}{m}\le2m\log2.
 \end{aligned}
\]
Induction yields \eqref{prelim:chebyshev-bound}.
For integers $n\ge2$, factorial prime factorization
\cite[Theorem~3.14]{Apostol} then gives
\[
 nM(n)\le\log(n!)+\vartheta(n)\le n\log n+n\log4.
\]
Taking $n=\lfloor x\rfloor$ proves
\eqref{prelim:prime-upper-bound}.

For the square-denominator sum, $f(t)=(\log t)/t^2$ is convex for
$t\ge3$, since $f''(t)=(6\log t-5)/t^4>0$. Every prime exceeding $7$
belongs to one of the progressions $11+6j$ and $13+6j$, $j\ge0$.
Convexity bounds each term by the average over its centered interval
of length $6$. Summing these intervals gives
\[
 \begin{aligned}
 \sum_{\substack{p\ {\rm prime}\\p\ {\rm odd}}}\frac{\log p}{p^2}
 &\le\frac{\log3}{9}+\frac{\log5}{25}+\frac{\log7}{49}
   +\frac16\int_8^\infty\frac{\log t}{t^2}\,dt
   +\frac16\int_{10}^\infty\frac{\log t}{t^2}\,dt\\
 &=\frac{\log3}{9}+\frac{\log5}{25}+\frac{\log7}{49}
   +\frac16\left(\frac{\log8+1}{8}+\frac{\log10+1}{10}\right)\\
 &<\frac{30607}{88200}<\frac7{20},
 \end{aligned}
\]
where the last line uses \eqref{prelim:numerical}.

For the interval estimate put $A(t)=\sum_{r\le t}\Lambda(r)/r$.
The factorial identity \cite[Theorem~3.12, equation~(17)]{Apostol}
$\log(n!)=\sum_{r\le n}\Lambda(r)\lfloor n/r\rfloor$,
\eqref{prelim:chebyshev-bound}, and integral comparison for $\log(n!)$
give
\begin{equation}\label{prelim:von-mangoldt-bounds}
 \log n-1+\frac1n
 \le {A}(n)\le
 \log n-1+\log4+\frac{1+\log n}{n}.
\end{equation}
The higher prime powers satisfy
\[
 \begin{aligned}
 V=\sum_p\frac{\log p}{p(p-1)}
 &\le\frac{\log2}{2}+\frac{\log3}{36}
       +\frac54\sum_{\substack{p\ {\rm prime}\\p\ {\rm odd}}}
                    \frac{\log p}{p^2}\\
 &<\frac7{20}+\frac{11}{360}+\frac7{16}
   =\frac{589}{720}<\frac{33}{40}.
 \end{aligned}
\]
Here the first inequality isolates $p=2,3$ and uses
$p/(p-1)\le5/4$ for $p\ge5$.
For real $x\ge y\ge100$, \eqref{prelim:von-mangoldt-bounds} gives
\[
 A(x)-A(y)\ge\log(x/y)-\log4-\frac{1+\log100}{100}.
\]
Indeed, the term $1/n$ absorbs $\log x-\log n$ for
$n=\lfloor x\rfloor$, and $(1+\log t)/t$ decreases for $t\ge100$.
Put $W(t)=A(t)-M(t)$. Then $W(x)\le V$, whereas the terms $4,9\le y$
give
\[
 W(y)\ge\frac{\log2}{4}+\frac{\log3}{9}.
\]
Consequently $M(x)-M(y)>\log(x/y)-2$, since the loss is less than
\[
 \frac{33}{40}+\frac74\log2-\frac19\log3
       +\frac{1+\log100}{100}
 <\frac{33}{40}+\frac{49}{40}-\frac{13}{108}+\frac3{50}
 =2-\frac7{675}<2.
\]
This proves \eqref{prelim:mertens-interval-bound}.
\end{proof}

\section{Integer counting and polynomial congruences}
\label{sec:arithmetic}

We prove a uniform prime-sum bound by coefficient counting and
polynomial congruences. Every polynomial used in the construction
has integer coefficients, and all polynomial congruences are
coefficientwise. No hypothesis about zeros is imposed.

\begin{proposition}\label{arith:main}
Let $\chi$ be a primitive quadratic Dirichlet character of
conductor $q\geq3$.  For every real $x\geq2$,
\begin{equation}\label{arith:target}
 S_-(x,\chi)\leq
 \frac{11}{20}\log x+\frac{29}{60}\log q+6.
\end{equation}
\end{proposition}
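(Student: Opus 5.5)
The plan is the Frobenius coefficient-counting method sketched in the outline. We bound the total $p$-adic divisibility of a single nonzero integer $H$ from both sides.

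\textbf{Setup.} Fix integer parameters $m$ (grid size), $s_0$ (multiplicity) and a degree bound, as in \eqref{arith:parameters}, chosen in terms of $x$ and $q$. Using \eqref{prelim:quadratic-reduction}, for every odd prime $p\le x$ with $\chi(p)=-1$ we have $d$ a nonresidue mod $p$. Hence $(t+\sqrt d)^p\equiv t-\sqrt d$ in $\mathbb F_p[\sqrt d]$, the Frobenius relation.

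\textbf{Auxiliary polynomial.} We consider polynomials $f(t,w)\in\mathbb Z[t,w]$ with bounded degrees and bounded integer coefficients. By the pigeonhole principle (Lemma~\ref{arith:counting}, following Kerr) we choose a nonzero $f$ such that the evaluations $f(t,t^p)$ vanish to high order at the points $u+v\sqrt d$ modulo $p$, for every relevant $p$ simultaneously. In practice this is done through the adapted triangular integer bases \eqref{arith:adapted-basis} of Green type: derivative divisibility turns vanishing to order $s$ into divisibility of suitable coefficients by $p^{s}$. This produces \eqref{arith:divisibility}, namely that a certain nonzero integer $H$ built from $f$ is divisible by $\prod_{p}p^{s_p}$, where the product runs over primes with $\chi(p)=-1$.

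\textbf{Nonvanishing.} We show $H\ne0$ through the injectivity Lemma~\ref{arith:injectivity}, using an auxiliary prime $p_0$ with $\chi(p_0)=-1$ of controlled size. Its existence follows, for instance, from the smallest nonresidue being small relative to $q$.

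\textbf{Comparison of bounds.} Taking logarithms, the divisibility gives the lower bound $\log|H|\ge\sum s_p\log p$, which is \eqref{arith:H-lower}. The coefficient bounds give the upper bound \eqref{arith:H-upper} on $\log|H|$. We choose the weights $s_p\approx s_0(1-\log p/\log x)$ or similar. Partial summation with $M$ then converts $\sum_{\chi(p)=-1}s_p\log p$ into a multiple of $S_-(x,\chi)$. Dividing through yields
\[
S_-(x,\chi)\le \tfrac{11}{20}\log x+\tfrac{29}{60}\log q+R.
\]
The remainder $R$ comes from rounding $m$ and $s_0$ to integers, from the prime $2$, and from prime powers. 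We control it using \eqref{prelim:chebyshev-bound}, \eqref{prelim:prime-upper-bound} and \eqref{prelim:odd-square-bound}, aiming for $R\le6$ as in \eqref{arith:remainder-six}.

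\textbf{Small cases.} When $x$ is small compared with $q$, or when $\log x$ is too small for integer parameters to exist, we bypass the construction. We use the trivial bound $S_-(x,\chi)\le M(x)\le\log x+\log4$. For $\log x\le\tfrac{29}{60}\cdot\tfrac{20}{9}\log q+\dots$ this is already at most the right-hand side, since $\log4<6$.

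\textbf{Main obstacle.} The hardest step is the explicit bookkeeping: choosing $m$, $s_0$ and the degree bound so that the counting inequality and the nonvanishing inequality $\deg g<p_0s_0<2s_0m(m-1)$ hold simultaneously, while the coefficients $11/20$ and $29/60$ survive integer rounding uniformly in $x\ge2$, $q\ge3$. I would first fix $\alpha=\log x/\log q$ as a real parameter. I would then check the inequalities asymptotically, and finally absorb the rounding losses into the constant $6$.
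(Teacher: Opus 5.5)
\textbf{The genuine gap is the auxiliary prime.} You fix $m$, $s_0$ and the degree bound in terms of $x$ and $q$, and then look for an inert $p_0$ ``of controlled size'', for instance a small nonresidue. Lemma~\ref{arith:injectivity} cannot be used that way.

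Its proof needs the $z$-degree of $f$ to be less than $p_0$. Only then are the exponents $r+p_0s$ distinct, so that $g(\xi)=f(\xi,\xi^{p_0})\not\equiv0\pmod{p_0}$; for $f=z^{p_0}-w$ one gets $g=0$ although no coefficient is divisible by $p_0$. It also needs $2m(m-1)>p_0$ together with $2(m-1)<p_0$.

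On the other hand, a prime $p$ acquires exponent only through $\lfloor r/p\rfloor$ with $r$ below the $z$-degree. Also, the coordinate bound $b_0=2\sqrt{p_0q}$ puts $\tfrac12\log p_0$ into the main term. So $p_0$ must be at least as large as every prime you want to count, and yet satisfy $\log p_0\le\log x+O(1)$. A small nonresidue is useless, and your plan supplies no inert prime of size comparable to $x$.

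The paper avoids the issue by letting $p_0$ be the \emph{largest} odd prime $\le x$ with $\chi(p_0)=-1$, and scaling everything to it: $s_0=\lfloor p_0/30\rfloor$, $m=\lceil\sqrt{p_0}\rceil$, $r<p_0$. Then the odd part of $S_-(x,\chi)$ is exactly $U$, and a bound in $\log p_0$ implies the bound in $\log x$.

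The case split is accordingly on $p_0$, not on $x$. The construction is used when $p_0\ge\max(10000,4q)$; the condition $p_0\ge4q$ gives $b_0\le p_0$, which the coordinate bounds $w_{r,s}$ require. Otherwise $S_-(x,\chi)\le M(p_0)\le\log p_0+\log4$ already suffices, or $S_-(x,\chi)\le(\log2)/2$ if there is no inert odd prime. Your split by the size of $x$ leaves open precisely the case where $x$ is large but the inert primes stop well short of $x$.

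\textbf{The exponent bookkeeping also needs to change.} With the adapted bases \eqref{arith:adapted-basis}, the exponent of $p$ is $H_p=\sum_{r,s}\min(\lfloor r/p\rfloor,s_0-1-s)\approx s_0p_0^2/(2p)$. This flat weight $1/p$ makes \eqref{arith:H-lower} come out directly in terms of $U=\sum_{p\in\mathcal P}(\log p)/p$ via \eqref{arith:min-sum}--\eqref{arith:floor-square}. The $p^{-2}$ correction is absorbed by \eqref{prelim:odd-square-bound}, and no partial summation is involved. A weight such as $s_0(1-\log p/\log x)$ lacks the factor $1/p$. Moreover, a taper vanishing at $x$ would lose in the coefficient of $\log x$ when you pass back to $S_-(x,\chi)$.

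Finally, $H$ is not the valuation of one integer built from a pigeonholed auxiliary $f$. Lemma~\ref{arith:counting} compares the box $\mathcal B$ with the image of $\iota$ modulo $d_0$, and no auxiliary polynomial is constructed. A nonzero $n\times n$ minor of the matrix of $\iota$ would be a valid single-integer substitute: it is divisible by $p^{H_p}$ through the unimodular adapted bases and bounded by Hadamard's inequality. But that is not the argument you describe.
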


\subsection{A counting lemma}

The following lemma abstracts the local image counts and the
coefficient-box argument in \cite[Lemmas~3--5]{Kerr2026}.

\begin{lemma}\label{arith:counting}
Let $n,m$ be positive integers and let
$\iota :\mathbb Z^n\longrightarrow\mathbb Z^m$ be an injective additive
map, and suppose that $\lVert \iota (e_i)\rVert_\infty\leq w_i$, where
$e_1,\ldots,e_n$ are the standard basis vectors and $w_i>0$.
Let $\mathcal P$ be a nonempty finite set of primes.  Suppose that for
each $p\in\mathcal P$ there is an integer basis
$b_1^{(p)},\ldots,b_n^{(p)}$ of $\mathbb Z^n$ and nonnegative integers
$h_i(p)$ such that
\[
 \iota (b_i^{(p)})\in p^{h_i(p)}\mathbb Z^m.
\]
Then
\begin{equation}\label{arith:counting-bound}
 H =\sum_{p\in\mathcal P}\sum_{i=1}^n h_i(p)\log p
 \leq n\log n+\sum_{i=1}^n\log w_i.
\end{equation}
\end{lemma}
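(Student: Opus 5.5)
The plan is to reduce everything to a single nonzero integer determinant. That determinant is forced to be divisible by $e^{H}$, and it is bounded above by $n^n\prod_i w_i$.

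\emph{Step 1 (a nonvanishing minor).} Write $M$ for the $m\times n$ integer matrix whose $i$-th column is $\iota(e_i)$, so that $\iota(c)=Mc$ for $c\in\mathbb Z^n$. Since $\iota$ is injective on $\mathbb Z^n$, the columns of $M$ are linearly independent over $\mathbb Q$. Indeed, a rational dependence could be cleared of denominators to give a nonzero integer kernel vector. Hence $M$ has rank $n$, and we may choose $n$ rows forming an $n\times n$ submatrix $M'$ with $\Delta=\det M'\ne0$, $\Delta\in\mathbb Z$.

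\emph{Step 2 (divisibility by each prime).} Fix $p\in\mathcal P$ and let $B_p$ be the $n\times n$ integer matrix with columns $b_1^{(p)},\dots,b_n^{(p)}$. Because these form a basis of $\mathbb Z^n$, we have $\det B_p=\pm1$. The $i$-th column of $MB_p$ is $\iota(b_i^{(p)})\in p^{h_i(p)}\mathbb Z^m$, so the $i$-th column of $M'B_p$ (the same rows) is divisible by $p^{h_i(p)}$. Pulling these factors out column by column shows
\[
 p^{\sum_i h_i(p)}\ \Big|\ \det(M'B_p)=\pm\Delta .
\]
The primes in $\mathcal P$ are distinct, so these prime powers are pairwise coprime. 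Therefore $\prod_{p\in\mathcal P}p^{\sum_i h_i(p)}=e^{H}$ divides $\Delta$. Since $\Delta\ne0$, this gives $H\le\log|\Delta|$.

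\emph{Step 3 (upper bound for the minor).} Expanding $\Delta$ by the Leibniz formula, each of the $n!$ terms is a product of one entry from each column. The entries of column $i$ are bounded in absolute value by $\lVert\iota(e_i)\rVert_\infty\le w_i$. Hence
\[
 |\Delta|\le n!\prod_{i=1}^n w_i\le n^n\prod_{i=1}^n w_i ,
\]
and taking logarithms gives \eqref{arith:counting-bound}. Hadamard's inequality would give the slightly better factor $n^{n/2}$, but it is not needed.

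The only point requiring care is Step 1: injectivity is stated over $\mathbb Z$, and we need rank $n$ over $\mathbb Q$, which follows by clearing denominators. Otherwise the argument is routine, and it replaces the box-counting formulation of Kerr by a determinant computation with the same outcome.
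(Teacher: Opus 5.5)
Your proof is correct, and it takes a genuinely different route from the paper's.

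\textbf{The paper's route.} The paper never forms a determinant. It chooses $\nu>\max h_i(p)$ and sets $d_0=\prod_{p\in\mathcal P}p^\nu$. It then bounds $\#\operatorname{im}(\iota\bmod d_0)\le d_0^n e^{-H}$ by working in the basis $b_i^{(p)}$ one prime at a time and gluing the primes with the Chinese remainder theorem. Finally it uses injectivity in pigeonhole form: the box $0\le c_i<d_0/(nw_i)$ maps injectively modulo $d_0$, because the difference of two box points has image of sup-norm less than $d_0$.

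\textbf{Your route.} You collect all primes into a single nonzero maximal minor $\Delta$. Unimodularity of $B_p$ and column multilinearity give $e^H\mid\Delta$, and the Leibniz expansion bounds $|\Delta|$.

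\textbf{What each buys.} Your version is shorter and needs no auxiliary modulus or box. It also gives the sharper $\log n!$, or $\tfrac n2\log n$ via Hadamard, in place of $n\log n$. In the application this gain is immaterial: it at most halves the term $4\log p_0/(p_0-1)$ in \eqref{arith:H-upper}, which is already negligible for $p_0\ge10000$, so no downstream constant changes. The paper's version uses injectivity over $\mathbb Z$ directly, with no passage to rank over $\mathbb Q$ and no choice of minor. It actually only needs that $\iota$ has no nonzero kernel vector in a bounded box. It also stays in the residue-counting language of Kerr's lemmas that the paper is following.
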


\begin{proof}
Choose an integer $\nu >\max_{p,i}h_i(p)$ and put
${d_0}=\prod_{p\in\mathcal P}p^\nu $.  In the basis belonging to $p$, changing
the $i$th coefficient by a multiple of $p^{\nu -h_i(p)}$ leaves its image
unchanged modulo $p^\nu $. Thus
\[
 \#\operatorname{im}(\iota \bmod p^\nu )
 \le\prod_{i=1}^n p^{\nu -h_i(p)}
 =p^{\nu n-\sum_i h_i(p)}.
\]
The Chinese remainder theorem gives
\[
 \#\operatorname{im}(\iota \bmod {d_0})
 \le\prod_{p\in\mathcal P}p^{\nu n-\sum_i h_i(p)}
 ={d_0}^n e^{-H }.
\]
Consider the half-open integer box
\[
 \mathcal B=\left\{c\in\mathbb Z^n:
       0\leq c_i<\frac{d_0}{nw_i}\ \ (1\leq i\leq n)\right\}.
\]
Since $[0,t)$ contains $\lceil t\rceil\ge t$ integers,
\[
 \#\mathcal B
 =\prod_{i=1}^n\left\lceil\frac{d_0}{nw_i}\right\rceil
 \ge\frac{{d_0}^n}{n^n\prod_iw_i}.
\]
For $c,c'\in\mathcal B$ with $\iota (c)\equiv \iota (c')\pmod {d_0}$,
\[
 \iota (c-c')\in {d_0}\mathbb Z^m,\qquad
 \|\iota (c-c')\|_\infty
 \le\sum_iw_i|c_i-c_i'|<{d_0}.
\]
Hence $\iota (c-c')=0$, and injectivity gives $c=c'$. Consequently
\[
 \frac{{d_0}^n}{n^n\prod_iw_i}
 \le\#\mathcal B
 \le\#\operatorname{im}(\iota \bmod {d_0})
 \le {d_0}^n e^{-H },
\]
which is equivalent to \eqref{arith:counting-bound}.
\end{proof}

\subsection{Remainders and an integer evaluation map}

Fix the squarefree integer $d\ne1$ in
\eqref{prelim:quadratic-reduction}, with $|d|\le q$.  For $f(t)\in\mathbb Z[t]$, let $\rho(f)$ be its remainder
on division by the monic polynomial $t^2-d$.  Thus
$\rho(f)=a+bt$ with $a,b\in\mathbb Z$.  In particular,
\begin{equation}\label{arith:remainder-product}
 \rho\bigl((a+bt)(c+et)\bigr)
       =(ac+dbe)+(ae+bc)t.
\end{equation}
For $f,g\in\mathbb Z[t]$, remainder reduction gives
\[
 \rho(f+g)=\rho(f)+\rho(g),\qquad
 \rho(fg)=\rho\bigl(\rho(f)\rho(g)\bigr).
\]
For a linear remainder put
\[
 \lVert a+bt\rVert=|a|+\sqrt{|d|}\,|b|.
\]
The multiplication formula shows that this size is submultiplicative:
\[
 \lVert\rho(fg)\rVert
 \leq\lVert\rho(f)\rVert\lVert\rho(g)\rVert.
\]
Since $|d|\geq1$, it bounds the absolute value of each of the two
integer coefficients.

For now, let ${p_0} $ be an odd prime with $\chi({p_0} )=-1$,
${p_0} \ge10000$, and ${p_0} \ge4q$. Set
\begin{equation}\label{arith:parameters}
 {s_0}=\left\lfloor\frac {p_0} {30}\right\rfloor,
 \qquad n={p_0} {s_0},
 \qquad {m}=\lceil\sqrt {p_0} \rceil,
 \qquad {b_0}=2\sqrt{{p_0} q}.
\end{equation}
We use every integer pair in the grid
\[
 0\leq u<{m},\qquad 1\leq v<{m}.
\]
It contains ${m}({m}-1)$ points.  Notice that
$2{m}({m}-1)\geq {m}^2>{p_0} $, since ${m}\geq2$ and a prime is not a square.
Also ${m}-1<\sqrt {p_0} $ and $2({m}-1)<{p_0} $.

Let $\mathcal V$ consist of the integer-coefficient polynomials
\[
 f(z,w)=\sum_{\substack{0\leq r<{p_0} \\0\leq s<{s_0}}}c_{r,s}z^rw^s,
 \qquad c_{r,s}\in\mathbb Z,
\]
identified with $\mathbb Z^n$ by their coefficient arrays.  Define
\begin{equation}\label{arith:evaluation}
 \iota :\mathcal V\longrightarrow\mathbb Z^{2{m}({m}-1){s_0}}
\end{equation}
by listing the two coefficients of every remainder
\[
 \rho\bigl((\partial_z^jf)(u+vt,u-vt)\bigr),
 \qquad 0\leq u<{m},\quad 1\leq v<{m},\quad 0\leq j<{s_0}.
\]
All derivatives are ordinary formal derivatives, without division by
factorials.

The next proof is a polynomial-remainder formulation of
\cite[proof of Theorem~1, Nonvanishing]{vanDijk2026};
compare also \cite[Lemma~4]{Stepanov2026}.

\begin{lemma}\label{arith:injectivity}
The integer map $\iota $ is injective.
\end{lemma}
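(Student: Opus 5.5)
Since $\iota$ is additive, injectivity is equivalent to a trivial kernel. Suppose $0\ne f\in\mathcal V$ satisfies $\iota(f)=0$. Divide all coefficients $c_{r,s}$ by their largest common power of ${p_0}$; the kernel is a subgroup closed under division whenever the quotient is integral (because $\mathbb Z^{m'}$ is torsion-free), so we may assume some $c_{r,s}\not\equiv0\pmod{p_0}$. Put $g(t)=f(t,t^{p_0})\bmod {p_0}\in\mathbb F_{p_0}[t]$. The exponents $r+{p_0}s$ with $0\le r<{p_0}$ are pairwise distinct, so $g\ne0$. Also $\deg g\le {p_0}-1+{p_0}({s_0}-1)<{p_0}{s_0}$.

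The core step converts $\iota(f)=0$ into divisibility. Since $\chi({p_0})=-1$ and ${p_0}\ge4q>|d|$, relation \eqref{prelim:quadratic-reduction} shows that $d$ is a nonresidue mod ${p_0}$. Hence $t^2-d$ is irreducible over $\mathbb F_{p_0}$, and $\mathbb F_{p_0}[t]/(t^2-d)=\mathbb F_{p^2_0}$ has Frobenius $t\mapsto t^{p_0}=-t$. Therefore $(u+vt)^{p_0}\equiv u-vt$ modulo $({p_0},t^2-d)$.

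For each grid point, consider the polynomial $G_j(T)=(\partial_z^jf)(T,T^{p_0})$. The chain rule gives $\frac{d}{dT}\,T^{p_0}=p_0T^{p_0-1}\equiv0$, so mod ${p_0}$ we get $g^{(j)}(T)=(\partial_z^jf)(T,T^{p_0})$. Substitute $T=u+vt$ in $\mathbb Z[t]$, reduce by $t^2-d$ and mod ${p_0}$, and use the Frobenius identity. Then $\iota(f)=0$ yields $g^{(j)}(u+vt)=0$ in $\mathbb F_{p^2_0}$ for all $0\le j<{s_0}$. As ${s_0}<{p_0}$, the factorials $j!$ are invertible, so the Taylor expansion shows that $(T-\alpha)^{s_0}\mid g$ over $\mathbb F_{p^2_0}$ with $\alpha=u+v\sqrt d$. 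Since $g$ has coefficients in $\mathbb F_{p_0}$, applying Frobenius also gives $(T-\bar\alpha)^{s_0}\mid g$. Here $\alpha\ne\bar\alpha$ because $v\not\equiv0$, as $1\le v<m<{p_0}$. Thus $\bigl((T-u)^2-dv^2\bigr)^{s_0}\mid g$ in $\mathbb F_{p_0}[T]$.

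The quadratics $(T-u)^2-dv^2$ are irreducible over $\mathbb F_{p_0}$, and they are pairwise distinct, hence coprime. Distinctness reduces to showing that the unordered pair $\{u\pm v\sqrt d\}$ determines $(u,v)$ with $v\ge1$. The sum $2u$ and the value $v^2 d$ are determined mod ${p_0}$. Since $0\le u<m$ and $2(m-1)<{p_0}$, $u$ is determined. Equality $v^2\equiv v'^2$ forces $v\equiv\pm v'$, and $v+v'<2m-2<{p_0}$ with both positive, so $v=v'$. Multiplying the divisibilities gives $2{s_0}m(m-1)\le\deg g<{p_0}{s_0}$. This contradicts $2m(m-1)>{p_0}$.

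The main obstacle is the derivative/Frobenius bookkeeping: one must justify rigorously that the integer remainders computed in $\iota$ reduce to evaluations of $g^{(j)}$ in $\mathbb F_{p^2_0}$. The key points are that the formal derivative of $T^{p_0}$ vanishes mod ${p_0}$, and that remainder mod $t^2-d$ commutes with reduction mod ${p_0}$ (the divisor is monic). I would isolate this as a ring-homomorphism statement $\mathbb Z[t]\to\mathbb Z[t]/(t^2-d)\to\mathbb F_{p_0}[t]/(t^2-d)$ before applying it.
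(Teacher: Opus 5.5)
Your proof is correct and is the paper's argument in field-theoretic language. The paper stays over $\mathbb F_{p_0}$: it proves that $f_{u,v}=(\xi-u)^2-dv^2$ divides $(\partial_z^jf)(\xi,2u-\xi)$ in $\mathbb Z[\xi]$ and uses $\xi^{p_0}\equiv 2u-\xi\pmod{p_0,f_{u,v}}$, which is your evaluation at $\alpha=u+v\sqrt d$ via $\mathbb F_{p_0}[\xi]/(f_{u,v})\cong\mathbb F_{p_0^2}$; it then raises the multiplicity for the quadratic $f_{u,v}$ itself, using $\gcd(f_{u,v},f_{u,v}')=1$, rather than for your two conjugate linear factors.

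One point needs tightening. Since $\deg g\ge p_0$, the full Taylor formula is unavailable in characteristic $p_0$, so justify $(T-\alpha)^{s_0}\mid g$ by the induction $g=(T-\alpha)^kh\Rightarrow g^{(k)}(\alpha)=k!\,h(\alpha)$ for $k<s_0$ (or by Hasse derivatives). That induction is what your remark about invertible $j!$ actually requires.
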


\begin{proof}
Suppose that $\iota (f)=0$ for some nonzero $f\in\mathcal V$.  Divide its
coefficients by their largest common power of ${p_0} $.  The resulting
polynomial, again called $f$, still satisfies $\iota (f)=0$, and at least
one coefficient is not divisible by ${p_0} $.

For each grid point put
\[
 f_{u,v}(\xi )=(\xi -u)^2-dv^2.
\]
Modulo ${p_0} $ these are irreducible quadratics, because $d$ is a nonsquare
and $v$ is nonzero modulo ${p_0} $.  They are pairwise different: equality
of two such polynomials first gives $u=u'$ modulo ${p_0} $ and then
$v^2=(v')^2$ modulo ${p_0} $.  The specified ranges of $u,v$, together with
$v+v'\leq2({m}-1)<{p_0} $, imply $u=u'$ and $v=v'$ as integers.  Thus the
polynomials $f_{u,v}$ are pairwise relatively prime modulo ${p_0} $.

For $0\leq j<{s_0}$, define the integer polynomial
\[
 h_{u,v,j}(\xi )=(\partial_z^jf)(\xi ,2u-\xi ).
\]
We first show that $f_{u,v}$ divides $h_{u,v,j}$ already in
$\mathbb Z[\xi ]$.  Divide by the monic quadratic and write the remainder
as $a\xi +b$.  After substituting $\xi =u+vt$ and reducing modulo $t^2-d$,
the dividend has remainder zero by $\iota (f)=0$, while the remainder of
the divisor is zero.  Hence
\[
 (au+b)+avt=0.
\]
Since $v>0$, this forces $a=b=0$ and proves the assertion.

Now reduce polynomial coefficients modulo ${p_0} $, and put
\[
 g(\xi )=f(\xi ,\xi ^{p_0} )\pmod {p_0} .
\]
The exponents $r+{p_0} s$, with $0\leq r<{p_0} $ and $0\leq s<{s_0}$, are distinct.
Consequently $g\ne0$ and $\deg g\leq {p_0} {s_0}-1$.
Euler's criterion and Fermat's little theorem give the polynomial
congruence
\begin{equation}\label{arith:quadratic-congruence}
 \xi ^{p_0} \equiv u+(dv^2)^{({p_0} -1)/2}(\xi -u)
      \equiv2u-\xi \pmod{{p_0} ,f_{u,v}(\xi )}.
\end{equation}
Here the first congruence uses $(\xi -u)^2\equiv dv^2$ modulo $f_{u,v}$.
Every positive derivative of $\xi ^{p_0} $ is zero modulo ${p_0} $, so
\[
 g^{(j)}(\xi )\equiv(\partial_z^jf)(\xi ,\xi ^{p_0} )\pmod {p_0} .
\]
Together with \eqref{arith:quadratic-congruence} and the divisibility
of $h_{u,v,j}$, this proves
\[
 f_{u,v}\mid g^{(j)}\pmod {p_0} \qquad(0\leq j<{s_0}).
\]

These divisibilities imply $f_{u,v}^{s_0}\mid g\pmod{p_0}$ by
polynomial division modulo ${p_0} $: if
$g\equiv f_{u,v}^k{k_0}\pmod{p_0}$ and $0\le k<{s_0}$,
the product rule gives
\[
 g^{(k)}\equiv k!(f_{u,v}')^k{k_0}\pmod{p_0,f_{u,v}}.
\]
The integer $k!$ is invertible modulo ${p_0} $, since $k<{s_0}<{p_0} $.
Furthermore $f_{u,v}$ and $f_{u,v}'=2(\xi -u)$ are relatively prime
modulo ${p_0} $, because $dv^2\not\equiv0\pmod {p_0} $.  The polynomial
Euclidean algorithm therefore gives $f_{u,v}\mid {k_0}\pmod{p_0}$.
Starting with $k=0$ and proceeding inductively proves the assertion.

Pairwise relative primality now gives
\[
 \prod_{\substack{0\leq u<{m}\\1\leq v<{m}}}f_{u,v}(\xi )^{s_0}\mid g(\xi )
 \pmod {p_0} .
\]
The polynomial on the left is monic of degree
$2{m}({m}-1){s_0}$. Since $g\ne0$, we obtain the contradiction
\[
 2{m}({m}-1){s_0}\le\deg g\le {p_0} {s_0}-1<2{m}({m}-1){s_0}.
\]
\end{proof}

\subsection{Triangular bases and divisibility}

Put
\[
 \mathcal P=\{p\leq {p_0} :p\text{ is odd and }\chi(p)=-1\},
 \qquad
 h_{r,s}(p)=\min\left(\left\lfloor\frac rp\right\rfloor,{s_0}-1-s\right),
\]
and write
\begin{equation}\label{arith:H-definition}
 H _p=\sum_{\substack{0\leq r<{p_0} \\0\leq s<{s_0}}}h_{r,s}(p),
 \qquad H =\sum_{p\in\mathcal P}H _p\log p.
\end{equation}
For each $p\in\mathcal P$, replace the monomials by the polynomials
\begin{equation}\label{arith:adapted-basis}
 g_{r,s}^{(p)}(z,w)=z^{r-ph_{r,s}(p)}w^s(z^p-w)^{h_{r,s}(p)}.
\end{equation}
Writing $h=h_{r,s}(p)$, their expansion is
\[
 g_{r,s}^{(p)}=z^rw^s+
 \sum_{k=1}^h(-1)^k\binom hk
             z^{r-pk}w^{s+k}.
\]
Every monomial stays in the original rectangle, and every additional
term has first exponent smaller than $r$.  Successive substitution in
increasing order of $r$ shows that these polynomials form an integer
basis of $\mathcal V$.  This is the triangular change of basis in
Green~\cite[Lemma~5.1]{Green2026}.

Euler's criterion and the binomial theorem give, coefficientwise,
\begin{equation}\label{arith:frobenius}
 \rho\bigl((u+vt)^p\bigr)\equiv u-vt\pmod p,
\end{equation}
because $\rho(t^p)=d^{(p-1)/2}t\equiv-t\pmod p$.
Thus the remainder of $z^p-w$ at every grid point has both coefficients
divisible by $p$. For $\psi_p(z,w)=z^p-w$,
\[
 \partial_z^j\psi_p(z,w)=
 \begin{cases}
 z^p-w,&j=0,\\
 \dfrac{p!}{(p-j)!}z^{p-j},&1\le j\le p,\\
 0,&j>p.
 \end{cases}
\]
Thus every derivative has a remainder divisible by $p$ at each grid
point.
In every term of the product rule applied to
\eqref{arith:adapted-basis}, each of its $h$ factors $z^p-w$ therefore
contributes a factor of $p$ after evaluation and remainder reduction.
Consequently
\begin{equation}\label{arith:divisibility}
 \iota (g_{r,s}^{(p)})\in p^{h_{r,s}(p)}\mathbb Z^{2{m}({m}-1){s_0}}.
\end{equation}
This argument includes all derivative orders used in $\iota $, even when
they are at least $p$.

On the grid,
\[
 \lVert u\pm vt\rVert
 \leq({m}-1)(1+\sqrt{|d|})
 <\sqrt {p_0} (1+\sqrt q)\leq {b_0}.
\]
For $j\leq r$ the corresponding derivative of $z^rw^s$ therefore
has remainder size at most ${p_0} ^j{b_0}^{r+s-j}$; it is zero for $j>r$.
Since ${p_0} \ge4q$ gives ${b_0}\le {p_0} $, we obtain the coordinate bound
\[
 \lVert \iota (z^rw^s)\rVert_\infty
 \leq({p_0} /{b_0})^{{s_0}-1}{b_0}^{r+s}=w_{r,s}.
\]
Apply Lemma~\ref{arith:counting}, using
Lemma~\ref{arith:injectivity} and \eqref{arith:divisibility}.
The identity $\sum_{r,s}(r+s)={p_0} {s_0}({p_0} +{s_0}-2)/2$ gives
\[
 H \leq {p_0} {s_0}\log({p_0} {s_0})+{p_0} {s_0}({s_0}-1)\log({p_0} /{b_0})
                   +\frac{{p_0} {s_0}({p_0} +{s_0}-2)}2\log {b_0}.
\]
Put $\lambda=({s_0}-1)/({p_0} -1)\le1/30$. Since ${p_0} {s_0}\le {p_0} ^2$,
\[
 \begin{aligned}
 \frac{2H }{{p_0} {s_0}({p_0} -1)}
 &\le\frac{2\log({p_0} {s_0})}{{p_0} -1}
    +\frac{2({s_0}-1)}{{p_0} -1}\log {p_0} 
    +\frac{{p_0} -{s_0}}{{p_0} -1}\log {b_0}\\
 &\le\frac{4\log {p_0} }{{p_0} -1}+\log {b_0}
       +\lambda(2\log {p_0} -\log {b_0}).
 \end{aligned}
\]
The coefficient of $\lambda$ is positive. Substituting
$\lambda\le1/30$ and
$\log {b_0}=\tfrac12\log {p_0} +\tfrac12\log q+\log2$ gives
\begin{equation}\label{arith:H-upper}
 \frac{2H }{{p_0} {s_0}({p_0} -1)}
 \leq\frac{11}{20}\log {p_0} +\frac{29}{60}\log q
       +\frac{29}{30}\log2+\frac{4\log {p_0} }{{p_0} -1}.
\end{equation}

\subsection{Summing the exponents}

Put ${U}=\sum_{p\in\mathcal P}(\log p)/p$.
For every integer $k\geq0$,
\begin{equation}\label{arith:min-sum}
 2\sum_{s=0}^{{s_0}-1}\min(k,{s_0}-1-s)\geq(2{s_0}-1)k-k^2.
\end{equation}
For $k<{s_0}$ this is an equality.  For $k\geq {s_0}$, the difference
between the two sides is $(k-{s_0})(k-{s_0}+1)\geq0$.
If ${p_0} =ap+b$ with $0\leq b<p$, summation in constant blocks gives
\begin{align}
 \sum_{r=0}^{{p_0} -1}\left\lfloor\frac rp\right\rfloor
 &=\frac{pa(a-1)}2+ab
  =\frac{{p_0} ({p_0} -p)}{2p}+\frac{b(p-b)}{2p}\notag\\
 &\geq\frac{{p_0} ({p_0} -1)}{2p}-\frac {p_0} 2,
 \label{arith:floor-sum}\\
 \sum_{r=0}^{{p_0} -1}\left\lfloor\frac rp\right\rfloor^2
 &\leq\frac{{p_0} ({p_0} -1)(2{p_0} -1)}{6p^2}.
 \label{arith:floor-square}
\end{align}
Apply \eqref{arith:min-sum} with $k=\lfloor r/p\rfloor$ and use
\eqref{arith:floor-sum}--\eqref{arith:floor-square}.  After discarding
a nonnegative term ${p_0} /2$, this gives
\[
 2H _p\geq
 \frac{{s_0}{p_0} ({p_0} -1)}p-\frac{{p_0} ({p_0} -1)}{2p}-{s_0}{p_0} 
 -\frac{{p_0} ({p_0} -1)(2{p_0} -1)}{6p^2}.
\]
Multiply by $\log p$ and sum over $\mathcal P$.  For the negative
prime sums, use $\vartheta({p_0} )$ and
\eqref{prelim:odd-square-bound}, respectively.  We obtain
\begin{equation}\label{arith:H-lower}
 \frac{2H }{{p_0} {s_0}({p_0} -1)}
 \geq {U}-\frac{U}{2{s_0}}-\frac{\vartheta({p_0} )}{{p_0} -1}
          -\frac{7(2{p_0} -1)}{120{s_0}}.
\end{equation}

\subsection{The remainder and the small cases}

\begin{proof}[Proof of Proposition~\ref{arith:main}]
If an odd prime $p\leq x$ contributes to $S_-(x,\chi)$, let ${p_0} $ be
the largest such prime. First suppose that ${p_0} \ge10000$ and ${p_0} \ge4q$.
The odd-prime
part of $S_-(x,\chi)$ equals ${U}$, and the possible contribution of $2$
is at most $(\log2)/2$.  Combining
\eqref{arith:H-upper} and \eqref{arith:H-lower} gives
\[
 S_-(x,\chi)\leq
 \frac{11}{20}\log {p_0} +\frac{29}{60}\log q+{\mathcal R}({p_0} ),
\]
where
\begin{equation}\label{arith:remainder}
 {\mathcal R}({p_0} )=
 \frac{\vartheta({p_0} )+4\log {p_0} }{{p_0} -1}
 +\frac{U}{2{s_0}}+\frac{7(2{p_0} -1)}{120{s_0}}+\frac{22}{15}\log2.
\end{equation}
Here ${s_0}\ge333$ and ${p_0} <30({s_0}+1)$, so
${p_0} /{s_0}<30+30/333<301/10$.
By Lemma~\ref{prelim:prime-estimates} and
\eqref{prelim:numerical},
\[
 \vartheta({p_0} )\leq(\log4){p_0} ,\qquad
 {U}\leq\log {p_0} +\log4,\qquad
 \log2<\frac7{10},\qquad\log10000<10.
\]
The functions $t/(t-1)$, $(\log t)/(t-1)$ and
$(\log t+\log4)/t$ are decreasing for $t\ge10000$.
Thus the four terms in \eqref{arith:remainder} satisfy
\begin{align*}
 \frac{\vartheta({p_0} )+4\log {p_0} }{{p_0} -1}
 &<\frac{10000(7/5)+4\cdot10}{9999}
   =\frac{14040}{9999}<\frac{141}{100},\\
 \frac{U}{2{s_0}}
 &<\frac{(301/10)(10+7/5)}{20000}<\frac1{50},\\
 \frac{7(2{p_0} -1)}{120{s_0}}&<\frac{2107}{600}<\frac{53}{15},
 \qquad
 \frac{22}{15}\log2<\frac{77}{75}.
\end{align*}
Consequently
\begin{equation}\label{arith:remainder-six}
 {\mathcal R}({p_0} )<\frac{141}{100}+\frac1{50}
                 +\frac{53}{15}+\frac{77}{75}
                 =\frac{599}{100}<6.
\end{equation}
Since ${p_0} \leq x$, this proves \eqref{arith:target} in this case.

If the largest contributing odd prime satisfies ${p_0} <10000$, then the
entire prime sum, including the possible contribution of $2$, is at
most $\log {p_0} +\log4$ by \eqref{prelim:prime-upper-bound}.  Hence
\[
 S_-(x,\chi)-\frac{11}{20}\log x-\frac{29}{60}\log q
 \leq\frac9{20}\log {p_0} +\log4
 <\frac9{20}\cdot10+\frac75=\frac{59}{10}<6.
\]
If instead ${p_0} <4q$, the same elementary upper bound gives
\[
 S_-(x,\chi)-\frac{11}{20}\log x-\frac{29}{60}\log q
 <-\frac1{30}\log {p_0} +\frac{89}{60}\log4
 <\frac{623}{300}<6.
\]
Finally, if there is no contributing odd prime, then
$S_-(x,\chi)\leq(\log2)/2$, and the same conclusion is immediate.
\end{proof}

The construction retains the quadratic relation $t^2=d$, expressed
through a fixed polynomial remainder. The factor-count proof of
Lemma~\ref{arith:injectivity} preserves the auxiliary-prime mechanism
and expresses it entirely through polynomial division and degree
counting.

\section{A real-variable estimate at a real zero}
\label{real:section}

The estimate below is an explicit refinement, at a real zero, of the
standard weighted Dirichlet hyperbola argument; compare
\cite[Exercise~58]{Tao2014} and \cite[Section~1, Lemma~5]{Tao2015}.
We combine the two character tails before estimating them, preserving
the cancellation between their kernels. This gives an explicit
remainder bound uniform as $\beta$ approaches $1$, without an extra
factor $(1-\beta)^{-1}$ in the error. Throughout this section, $\chi$
is a nonprincipal quadratic character modulo $q\ge3$; primitivity is not
needed.

For $0\le {u}\le v$, write
\[
 A_{u}(v)=\sum_{{u}<n\le v}\chi(n).
\]
The complete character sum vanishes
\cite[Section~6.9]{Apostol}. Write
$\lfloor v\rfloor-\lfloor {u}\rfloor=kq+r$, where $0\le r<q$.
Removing $k$ complete periods and comparing the remaining block
with its complement gives
\[
 |A_{u}(v)|\le\min(r,q-r)\le\frac q2.
\]
Convergence for $s>0$ and partial summation
\cite[Theorems~6.17 and~4.2]{Apostol} therefore give, for ${u}\ge1$,
\begin{equation}\label{real:tail}
 R_s({u})=\sum_{n>{u}}\frac{\chi(n)}{n^s}
 =s\int_{u}^\infty A_{u}(v)v^{-s-1}\,dv,
 \qquad |R_s({u})|\le\frac q2 {u}^{-s}.
\end{equation}

Put
\begin{equation}\label{real:positive-sum}
 a(n)=(1*\chi)(n)=\sum_{d\mid n}\chi(d),\qquad
 H_\beta(t)=\sum_{n\le t}\frac{a(n)}{n^\beta}
 \quad(t\ge1).
\end{equation}
By the standard properties of Dirichlet convolution and real
characters \cite[Theorems~2.14 and~6.19]{Apostol}, $a$ is
multiplicative, and
\[
 a(p^k)=\sum_{j=0}^k\chi(p)^j=
 \begin{cases}
 k+1,&\chi(p)=1,\\
 1,&\chi(p)=0,\\
 \dfrac{1+(-1)^k}{2},&\chi(p)=-1.
 \end{cases}
\]
Consequently $a(n)\ge0$ for every $n$, and
\begin{equation}\label{real:positive-normalization}
 a(1)=1,\qquad a(p)=1+\chi(p),\qquad
 a(m^2)\ge1,\qquad H_\beta(t)\ge1.
\end{equation}

\begin{proposition}[Paired tails]\label{real:hyperbola}
Suppose that $1/2\leq\beta<1$ and $L(\beta,\chi)=0$. Write
$\varepsilon=1-\beta$. For every real $t\geq1$,
\begin{equation}\label{real:hyperbola-bound}
 H_\beta(t)=\frac{L(1,\chi)}{\varepsilon}t^\varepsilon+E(t),
 \qquad
 |E(t)|\leq
 \left(q+1+\frac12\log t\right)t^{1/2-\beta}.
\end{equation}
\end{proposition}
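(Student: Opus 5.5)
The plan is the Dirichlet hyperbola method for $H_\beta(t)=\sum_{dm\le t}\chi(d)d^{-\beta}m^{-\beta}$, split at $T=\sqrt t$. The zero $L(\beta,\chi)=0$ turns every partial character sum $\sum_{d\le y}\chi(d)d^{-\beta}$ into $-R_\beta(y)$. The two tails $R_1(T)$ and $R_\beta(T)$ that carry the factor $\varepsilon^{-1}$ are then combined into a single sum before any absolute values are taken. The ingredients are the tail bound \eqref{real:tail}, Euler summation for $\sum_{m\le y}m^{-\beta}$, and one partial summation against a kernel whose total variation is controlled without the factor $\varepsilon^{-1}$.

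\emph{Step 1 (hyperbola decomposition).} Write $D(y)=\sum_{d\le y}\chi(d)d^{-\beta}$ and $Z(y)=\sum_{m\le y}m^{-\beta}$. Then
\[
H_\beta(t)=\sum_{d\le T}\chi(d)d^{-\beta}Z(t/d)+\sum_{m\le T}m^{-\beta}D(t/m)-D(T)Z(T).
\]
Euler summation gives $Z(y)=y^{\varepsilon}/\varepsilon+\zeta_\beta+e(y)$ with a constant $\zeta_\beta$ and $|e(y)|\le y^{-\beta}$ for $y\ge1$; I will verify this remainder bound via $-\{y\}y^{-\beta}+\beta\int_y^\infty\{u\}u^{-\beta-1}\,du$. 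Since $L(\beta,\chi)=0$, we have $D(y)=-R_\beta(y)$.

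\emph{Step 2 (expanding the three pieces).} The first piece contributes
\[
\frac{t^\varepsilon}{\varepsilon}\sum_{d\le T}\frac{\chi(d)}{d}
=\frac{t^\varepsilon}{\varepsilon}\bigl(L(1,\chi)-R_1(T)\bigr),
\]
together with $-\zeta_\beta R_\beta(T)$ and the error $\sum_{d\le T}\chi(d)d^{-\beta}e(t/d)$. The last is at most $T\cdot t^{-\beta}=t^{1/2-\beta}$ in absolute value.

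The third piece is
\[
R_\beta(T)\bigl(T^\varepsilon/\varepsilon+\zeta_\beta+e(T)\bigr).
\]
Its $\zeta_\beta$-term cancels the one from the first piece exactly. Its error is at most $\tfrac q2T^{-2\beta}$.

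The second piece is $-\sum_{m\le T}m^{-\beta}R_\beta(t/m)$. By \eqref{real:tail} it is bounded by $\tfrac q2t^{-\beta}\sum_{m\le T}1\le\tfrac q2t^{1/2-\beta}$.

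Both small errors are then absorbed into $(q+1)t^{1/2-\beta}$, using $T^{-2\beta}=t^{-\beta}\le t^{1/2-\beta}$.

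\emph{Step 3 (paired tails, the main obstacle).} It remains to bound
\[
P=\frac{1}{\varepsilon}\bigl(T^\varepsilon R_\beta(T)-T^{2\varepsilon}R_1(T)\bigr)
=\sum_{n>T}\chi(n)k(n),
\qquad k(v)=\frac{T^\varepsilon(v^\varepsilon-T^\varepsilon)}{\varepsilon v}.
\]
Estimating the two tails separately would lose $\varepsilon^{-1}$. Instead I will use partial summation with $A_T(v)$, written as $P=-\int_T^\infty A_T(v)k'(v)\,dv$; this is legitimate because $k(T)=0$ and $k(v)\to0$ as $v\to\infty$. Then $|P|\le\tfrac q2\operatorname{Var}_{[T,\infty)}k$.

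The kernel $k$ is unimodal, rising from $0$ and then decaying. Hence the variation is $2\max k$, and $|P|\le q\max k$. Substituting $v=Tx$ gives
\[
k=T^{2\varepsilon-1}\,x^{\varepsilon-1}\,\frac{x^\varepsilon-1}{\varepsilon}
\le T^{2\varepsilon-1}\,x^{2\varepsilon-1}\log x .
\]
I still need to check how to reconcile this maximum with the stated shape of the bound. As $\beta\to1/2$ the factor $x^{2\varepsilon-1}\log x$ is only controlled by $\log x$ up to $x\le T$, which produces the term $\tfrac12\log t\cdot t^{1/2-\beta}$. This suggests splitting the integral at $v=t$: use the crude bound $|R_s(v)|\le\tfrac q2v^{-s}$ beyond $t$, and the variation bound on $[T,t]$, so that a $\log(t/T)=\tfrac12\log t$ factor appears with coefficient compatible with \eqref{real:hyperbola-bound}.

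Making the constants come out exactly as $q+1+\tfrac12\log t$ is the delicate bookkeeping. I expect to have to redistribute the $q$-parts among $P$, the second piece, and the third-piece error to achieve it.
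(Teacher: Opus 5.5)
There is a genuine gap. Your Steps~1 and~2 are computed correctly, but Step~3 contains an algebra slip, and the error budget you set up cannot produce the stated constant.

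First, the substitution $v=Tx$ is miscomputed. From $k(v)=T^\varepsilon(v^\varepsilon-T^\varepsilon)/(\varepsilon v)$ one gets
\[
 k=T^{2\varepsilon-1}\,\frac{x^\varepsilon-1}{\varepsilon x}
 =T^{2\varepsilon-1}\,\frac1x\int_1^x w^{\varepsilon-1}\,dw,
\]
without your extra factor $x^\varepsilon$. Since $\varepsilon\le\frac12$, we have $w^{\varepsilon-1}\le w^{-1/2}$. Hence $k\le T^{2\varepsilon-1}\cdot 2(\sqrt x-1)/x\le\frac12T^{2\varepsilon-1}$, by $(\sqrt x-2)^2\ge0$. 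Therefore $|P|\le q\max k\le\frac q2t^{1/2-\beta}$, uniformly in $t$ and $\beta$. The logarithmic growth as $\beta\to\frac12$ is an artifact of the slip, and no splitting at $v=t$ is needed. This is exactly the paper's paired-tail bound.

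Second, even with this bound, your allocation of errors cannot give the stated inequality. You spend all of $q+1$ in Step~2, which leaves only $\frac12\log t\cdot t^{1/2-\beta}$ for $P$. At $t=1$ that budget is zero, yet $P=-L(1,\chi)/\varepsilon$ there. With the correct $|P|$ your total is $(q+1+\frac q2t^{-1/2})t^{1/2-\beta}$, which does not imply the claim whenever $\sqrt t\log t<q$. No redistribution of these separate bounds helps: at $t=1$ they already sum to $\frac{3q}2+1$.

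The culprit is bounding $R_\beta(T)e(T)$ separately by $\frac q2T^{-2\beta}$. Do not split off $-D(T)Z(T)$. Instead keep the first and third pieces together as
\[
 \sum_{d\le T}\chi(d)d^{-\beta}\bigl(Z(t/d)-Z(T)\bigr)
 =\sum_{d\le T}\chi(d)d^{-\beta}\sum_{T<m\le t/d}m^{-\beta}.
\]
The Euler remainder on $(T,t/d]$ is $e(t/d)-e(T)$, and it lies in $[-T^{-\beta},T^{-\beta}]$, not merely in $[-2T^{-\beta},2T^{-\beta}]$. The total rounding error is then at most
\[
T^{-\beta}\sum_{d\le T}d^{-\beta}\le T^{\varepsilon-\beta}(1+\log T)=\bigl(1+\tfrac12\log t\bigr)t^{1/2-\beta}.
\]
This, and not the paired tail, is where the $\frac12\log t$ comes from. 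The main terms of this grouping give the same $\frac{L(1,\chi)}{\varepsilon}t^\varepsilon+P$ as before. Adding $\frac q2t^{1/2-\beta}$ from your second piece and $\frac q2t^{1/2-\beta}$ from $P$ gives exactly $(q+1+\frac12\log t)t^{1/2-\beta}$. This is the paper's argument: its asymmetric form of the hyperbola identity is precisely this grouping.
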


\begin{proof}
Set ${u}=\sqrt t$. The Dirichlet hyperbola identity
\cite[Theorem~3.17]{Apostol}, applied to $dm\le t$, gives
\begin{equation}\label{real:hyperbola-identity}
 \begin{split}
 H_\beta(t)
 ={}&\sum_{d\leq {u}}d^{-\beta}
       \sum_{m\leq t/d}\frac{\chi(m)}{m^\beta}\\
 &+\sum_{m\leq {u}}\frac{\chi(m)}{m^\beta}
       \sum_{{u}<d\leq t/m}d^{-\beta}.
 \end{split}
\end{equation}
The first line has absolute value at most
\begin{equation}\label{real:first-half}
 \sum_{d\leq {u}}d^{-\beta}\frac q2(t/d)^{-\beta}
 \leq\frac q2t^{1/2-\beta},
\end{equation}
by $L(\beta,\chi)=0$ and \eqref{real:tail}.

Euler summation \cite[Theorem~3.1]{Apostol}, valid for real endpoints,
gives
\[
 \sum_{{u}<n\le {v}}n^{-\beta}
 =\frac{{v}^\varepsilon-{u}^\varepsilon}{\varepsilon}
    +\mathcal R_\beta({u},{v}),\qquad
 |\mathcal R_\beta({u},{v})|\le {u}^{-\beta}\qquad({v}\ge {u}).
\]
Indeed, with $\{w\}=w-\lfloor w\rfloor$, the remainder is
\[
 \mathcal R_\beta({u},{v})
 =\{{u}\}{u}^{-\beta}-\{{v}\}{v}^{-\beta}
      -\beta\int_{u}^{v}\{w\}w^{-\beta-1}\,dw,
\]
which lies between $-{u}^{-\beta}$ and ${u}^{-\beta}$.
Applying this to the second line of \eqref{real:hyperbola-identity}
yields
\[
 \frac{t^\varepsilon}{\varepsilon}
      \sum_{m\leq {u}}\frac{\chi(m)}m
 -\frac{{u}^\varepsilon}{\varepsilon}
      \sum_{m\leq {u}}\frac{\chi(m)}{m^\beta}
 +E_0,
\]
where
\begin{equation}\label{real:rounding-error}
 |E_0|
 \leq {u}^{-\beta}\sum_{m\leq {u}}m^{-\beta}
 \leq {u}^{\varepsilon-\beta}(1+\log {u})
 =\left(1+\frac12\log t\right)t^{1/2-\beta}.
\end{equation}
Here $m^{-\beta}\leq {u}^\varepsilon/m$ for $m\leq {u}$.
Since $L(\beta,\chi)=0$, the two main sums equal
\begin{equation}\label{real:paired-tails}
 \frac{L(1,\chi)}{\varepsilon}t^\varepsilon
 -\frac{t^\varepsilon}{\varepsilon}R_1({u})
 +\frac{{u}^\varepsilon}{\varepsilon}R_\beta({u}).
\end{equation}

The last two terms are estimated together. We combine the two
convergent tails with a common finite cutoff before passing to the
limit. Their sum is
\[
 E_1={u}^\varepsilon\sum_{m>{u}}\chi(m)g(m),
 \qquad
 g(v)=\frac{v^{-\beta}-{u}^\varepsilon v^{-1}}{\varepsilon}
     =\frac1v\int_{u}^v w^{-\beta}\,dw.
\]
For $v\ge u$, the kernel satisfies
\[
 g(v)\ge0,\qquad g({u})=\lim_{v\to\infty}g(v)=0,\qquad
 g'(v)=\frac{v^{-2}}{\varepsilon}
        \left({u}^\varepsilon-\beta v^\varepsilon\right).
\]
The derivative changes sign once, at $v={u}\beta^{-1/\varepsilon}$.
For $x=v/{u}\geq1$, the assumption $\beta\geq1/2$ gives
\[
 g(v)
 =\frac{{u}^{-\beta}}x\int_1^x w^{-\beta}\,dw
 \leq2{u}^{-\beta}\frac{\sqrt x-1}{x}
 \leq\frac12{u}^{-\beta},
\]
where the last inequality is $(\sqrt x-2)^2\geq0$. Therefore
\[
 \int_{u}^\infty|g'(v)|\,dv=2\max_{v\geq {u}}g(v)\leq {u}^{-\beta}.
\]
Another summation by parts, whose boundary terms vanish, yields
\[
 \sum_{m>{u}}\chi(m)g(m)
 =-\int_{u}^\infty A_{u}(v)g'(v)\,dv.
\]
The bound $|A_{u}(v)|\le q/2$ now gives
\begin{equation}\label{real:paired-error}
 |E_1|\leq\frac q2 {u}^{\varepsilon-\beta}
 =\frac q2t^{1/2-\beta}.
\end{equation}
Combining the three errors gives
\[
 |E(t)|
 \le\left(\frac q2+1+\frac12\log t+\frac q2\right)t^{1/2-\beta},
\]
as required.
\end{proof}

Pairing the tails in \eqref{real:paired-tails} removes the factor
$\varepsilon^{-1}$ that their separate absolute estimates would
introduce. The coefficient $L(1,\chi)/\varepsilon$ will be eliminated
by comparing finite positive sums; no lower bound for it is needed.
For comparison, the standard bound in
\cite[Exercise~58]{Tao2014}, specialized to a real zero, has remainder
$O(q(1-\beta)^{-1}t^{1/2-\beta})$. The estimate above retains the
cancellation needed to avoid that additional dependence on the zero.

\section{Positivity and the explicit zero gap}
\label{proof:section}

The use of $a=1*\chi$, the harmonic sum
$H_1(y)=\sum_{n\le y}a(n)/n$, and a single-large-prime comparison
already appears in \cite[proof of Proposition~3.5]{TaoTeravainen2022}.
That proof uses a lower bound for $L(1,\chi)$ and an estimate for
$L'/L(1,\chi)$; see also \cite[Section~1, Lemma~5]{Tao2015} for
the positive-convolution approach to prime bias.
The lemma below uses the logarithmic weight $\log p/p$ and retains
the full error terms. Its Abel summation identity, combined with
Proposition~\ref{real:hyperbola}, supplies the explicit comparison
without those preliminary estimates. This realizes the standard
linear dependence on $1-\beta$ noted in the introduction with the
constants needed for a single choice of parameters for all conductors.

\begin{lemma}[A logarithmically weighted comparison]
\label{real:large-prime}
Under the hypotheses of Proposition~\ref{real:hyperbola}, write
$\varepsilon=1-\beta$ and put
\[
 H_1(t)=\sum_{n\le t}\frac{a(n)}n,\qquad
 B(t)=\left(q+1+\frac12\log t\right)t^{-1/2}.
\]
Let $1\le y\le x$ and suppose that $\varepsilon\log(xy)\le1$.
Set $\kappa =\log x\log y+\tfrac12(\log x)^2$. Then
\begin{align}
 J=\sum_{y<p\le x}\frac{a(p)\log p}{p}
 &\le\varepsilon \kappa +\frac{\mathcal E (x,y)}{H_1(y)},
 \label{real:prime-mass}\\
 \mathcal E (x,y)
 &=B(y)(\varepsilon \kappa +\log y)+B(xy)\log(xy)\notag\\
 &\quad+2B(y)+2y^{-1/2}.
 \label{real:comparison-error}
\end{align}
If also $y\ge\max(q,100)$, then
\begin{equation}\label{real:inert-lower}
 S_-(x,\chi)\ge\log(x/y)-2-\frac J2.
\end{equation}
\end{lemma}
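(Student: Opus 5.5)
The plan is to bound $J$ from above through the injection $(n,p)\mapsto np$, then evaluate the resulting positive sum with Proposition~\ref{real:hyperbola} after converting it from the $\beta$-weighting to the $1$-weighting. The lower bound \eqref{real:inert-lower} is a separate and easy step.

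\textbf{Step 1 (injection).} Put $N(t)=\sum_{n\le t}a(n)\log n/n$. For $n\le y<p\le x$ the products $np$ are distinct, since $p$ is the largest prime factor of $np$ exceeding $y$. They lie in $(y,xy]$, and $a(np)\ge a(n)a(p)$ by multiplicativity: if $p\nmid n$ equality holds, and the prime-power cases are checked from the formula for $a(p^k)$. Since $a\ge0$ and $\log(np)\ge\log p$, this gives
\[
 H_1(y)\,J\le N(xy)-N(y)=\int_{y}^{xy}\log s\,dH_1(s).
\]

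\textbf{Step 2 (from $H_\beta$ to $H_1$).} Since $a(n)/n=a(n)n^{-\beta}\cdot n^{-\varepsilon}$, partial summation gives the exact identity
\[
 H_1(t)=H_\beta(t)t^{-\varepsilon}+\varepsilon\int_1^tH_\beta(s)s^{-1-\varepsilon}\,ds .
\]
Inserting \eqref{real:hyperbola-bound}, the main terms give $L(1,\chi)/\varepsilon+L(1,\chi)\log t$. The errors give
\[
 F(t)=E(t)t^{-\varepsilon}+\varepsilon\int_1^tE(s)s^{-1-\varepsilon}\,ds,
\]
with $|E(t)t^{-\varepsilon}|\le B(t)$. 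I will arrange the identity relative to the base point $y$, so that only $H_1(s)-H_1(y)$ for $s\ge y$ enters. Then the integral part of the error is bounded by $\varepsilon\int_y^s B(r)r^{-1}\,dr$, which is at most a bounded multiple of $B(y)$. The factor $\varepsilon$ is kept and contributes the term $B(y)\varepsilon\kappa$. The hypothesis $\varepsilon\log(xy)\le1$ controls the factors $s^{\pm\varepsilon}$ for $s\le xy$. I also need $L(1,\chi)\ge0$; I would get it by letting $t\to\infty$ in $H_\beta(t)\ge1$, since the error in \eqref{real:hyperbola-bound} tends to $0$.

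\textbf{Step 3 (assembling the comparison).} Write
\[
 \int_y^{xy}\log s\,dH_1(s)=\bigl[\log s\,(H_1(s)-H_1(y))\bigr]_y^{xy}-\int_y^{xy}\frac{H_1(s)-H_1(y)}{s}\,ds .
\]
With the main term $L(1,\chi)\log(s/y)$ this gives exactly
\[
 L(1,\chi)\int_y^{xy}\frac{\log s}{s}\,ds=L(1,\chi)\kappa .
\]
The error contributions give $B(xy)\log(xy)$ from the upper boundary term, $B(y)\log y$ from the terms at $y$, and the constants $2B(y)+2y^{-1/2}$ from integrating $B(s)/s$ and from the rounding in the integral parts. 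To eliminate $L(1,\chi)$, I use
\[
 H_1(y)\ge L(1,\chi)\Bigl(\frac1\varepsilon+\log y\Bigr)-|F(y)|\ge\frac{L(1,\chi)}{\varepsilon}-B(y),
\]
hence $L(1,\chi)\kappa\le\varepsilon\kappa\bigl(H_1(y)+B(y)\bigr)$. Dividing by $H_1(y)$ yields \eqref{real:prime-mass}. The main obstacle is this bookkeeping: matching every error term to exactly the four terms of \eqref{real:comparison-error}. I would first try to absorb all integral tails using the monotonicity of $B$ on $[y,\infty)$ and the bound $\int_y^\infty B(s)s^{-1}\,ds\le 2B(y)+2y^{-1/2}$.

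\textbf{Step 4 (inert lower bound).} Assume $y\ge\max(q,100)$. For a prime $p>y\ge q$ we have $\chi(p)=\pm1$, so $a(p)=1+\chi(p)\in\{0,2\}$. Hence
\[
 M(x)-M(y)=\bigl(S_-(x,\chi)-S_-(y,\chi)\bigr)+\frac J2 .
\]
Since $S_-(y,\chi)\ge0$, applying \eqref{prelim:mertens-interval-bound} gives
\[
 S_-(x,\chi)\ge M(x)-M(y)-\frac J2>\log(x/y)-2-\frac J2,
\]
which is \eqref{real:inert-lower}.
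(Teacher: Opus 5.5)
Your Steps 1 and 4 agree with the paper. The elimination of $L(1,\chi)$ in Step~3, however, has a genuine gap. You write $H_1(y)\ge L(1,\chi)(1/\varepsilon+\log y)-|F(y)|\ge L(1,\chi)/\varepsilon-B(y)$. After discarding $L(1,\chi)\log y\ge0$, the second inequality needs $|F(y)|\le B(y)$, and this does not follow.

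With $D(s)=E(s)s^{-\varepsilon}$, we have $F(y)=D(y)+\varepsilon\int_1^yD(s)s^{-1}\,ds$, which contains an integral over all of $[1,y]$. From \eqref{real:hyperbola-bound} the only available bound for that integral is $\varepsilon\int_1^yB(s)s^{-1}\,ds\le\varepsilon(2q+4)$, since $B(1)=q+1$. For $y=q^6$ this dwarfs $B(y)\approx q^{-2}$. Carrying it along would add a term of size roughly $\varepsilon^2 q\kappa$, which is not in $\mathcal E(x,y)$.

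The missing idea is positivity. Since $H_\beta\ge0$,
\[
 L(1,\chi)\log y+\varepsilon\int_1^yD(s)s^{-1}\,ds=\varepsilon\int_1^yH_\beta(s)s^{-1-\varepsilon}\,ds\ge0,
\]
so your own identity gives directly
\[
 H_1(y)\ge y^{-\varepsilon}H_\beta(y)=\frac{L(1,\chi)}{\varepsilon}+D(y)\ge\frac{L(1,\chi)}{\varepsilon}-B(y).
\]
This is the paper's inequality $H_\beta(y)\le y^{\varepsilon}H_1(y)$, which holds termwise because $n^{-\beta}\le y^{\varepsilon}n^{-1}$ for $n\le y$. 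Multiplying by $\varepsilon\kappa\ge0$ then needs no sign information on $L(1,\chi)$. So your limit argument for $L(1,\chi)\ge0$ can be dropped; as stated it also overlooks that $E(t)$ need not tend to $0$ when $\beta=1/2$. Note too that the term $B(y)\varepsilon\kappa$ in $\mathcal E(x,y)$ comes from this elimination, not from the $\varepsilon$-weighted integrals as your Step~2 suggests.

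The remaining bookkeeping is not just a matter of absorbing tails: the stated $\mathcal E(x,y)$ requires an exact cancellation. Expand your Step~3 using $F(s)-F(y)=D(s)-D(y)+\varepsilon\int_y^sD(r)r^{-1}\,dr$ and apply Fubini to the resulting double integral. The error then collapses to
\[
 D(xy)\log(xy)-D(y)\log y+\int_y^{xy}(\varepsilon\log t-1)\frac{D(t)}{t}\,dt .
\]
Only at this point do $|D|\le B$ and $0\le\varepsilon\log t\le1$ give exactly $B(xy)\log(xy)+B(y)\log y+2B(y)+2y^{-1/2}$. If you instead estimate the pieces of $F(s)-F(y)$ separately, as you propose, you get $B(y)(\log(xy)+\log x)$ in place of $B(y)\log y$, and up to three copies of $2B(y)+2y^{-1/2}$.

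The paper reaches the displayed remainder in one step. It applies partial summation to $N$ directly against $H_\beta$ with weight $\phi(t)=(\log t)t^{-\varepsilon}$, avoiding the detour through $H_1$ altogether.
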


\begin{proof}
Put $N(t)=\sum_{n\le t}a(n)(\log n)/n$.
For $n\le y<p\le x$, we have $(n,p)=1$ and $y<np\le xy$.
The map $(n,p)\mapsto np$ is injective: if $np=n'p'$ with $p\ne p'$,
then $p\mid n'\le y<p$, a contradiction. Multiplicativity,
nonnegativity, and $\log p\le\log(np)$ give
\begin{equation}\label{real:weighted-injection}
 \begin{aligned}
 H_1(y)J
 &=\sum_{\substack{n\le y\\y<p\le x}}
       \frac{a(np)\log p}{np}\\
 &\le\sum_{y<m\le xy}\frac{a(m)\log m}{m}
  =N(xy)-N(y).
 \end{aligned}
\end{equation}

Write $D(t)=t^{-\varepsilon}E(t)$ and
$\phi(t)=(\log t)t^{-\varepsilon}$. Partial summation gives
\[
 \begin{aligned}
 N(xy)-N(y)
 ={}&\phi(xy)H_\beta(xy)-\phi(y)H_\beta(y)\\
 &-\int_y^{xy}H_\beta(t)\phi'(t)\,dt.
 \end{aligned}
\]
Substituting \eqref{real:hyperbola-bound} yields exactly
\begin{equation}\label{real:weighted-abel}
 N(xy)-N(y)=L(1,\chi)\kappa +R,
\end{equation}
where
\[
 \begin{aligned}
 R={}&D(xy)\log(xy)-D(y)\log y\\
 &+\int_y^{xy}(\varepsilon\log t-1)\frac{D(t)}t\,dt.
 \end{aligned}
\]
Since $|D(t)|\le B(t)$ and $0\le\varepsilon\log t\le1$ on
$[y,xy]$,
\[
 |R|\le B(xy)\log(xy)+B(y)\log y
           +\int_y^\infty\frac{B(t)}t\,dt.
\]
The integral is $2B(y)+2y^{-1/2}$.

Positivity also gives $H_\beta(y)\le y^\varepsilon H_1(y)$, so
\[
 \frac{L(1,\chi)}{\varepsilon}
 =y^{-\varepsilon}\bigl(H_\beta(y)-E(y)\bigr)
   \le H_1(y)+B(y).
\]
Since $\varepsilon>0$ and $\kappa\ge0$, it follows that
\[
 L(1,\chi)\kappa
 \le\varepsilon \kappa \bigl(H_1(y)+B(y)\bigr).
\]
Combining this with \eqref{real:weighted-injection} and
\eqref{real:weighted-abel}, and dividing by $H_1(y)\ge1$, proves
\eqref{real:prime-mass}. No sign assumption or preliminary estimate
for $L(1,\chi)$ has been used.

If $y\ge q$, every prime $p>y$ is coprime to $q$, and $a(p)=2$
exactly when $\chi(p)=1$. Hence
\[
 S_-(x,\chi)-S_-(y,\chi)
 =M(x)-M(y)-\frac J2.
\]
Now use $S_-(y,\chi)\ge0$ and
\eqref{prelim:mertens-interval-bound}.
\end{proof}

\begin{proof}[Proof of Theorem~\ref{thm:main}]
Suppose that $L(\beta,\chi)=0$ for some $\beta\in(0,1)$ with
\[
 \delta=(1-\beta)\log q\le\frac1{100}.
\]
Take
\begin{equation}\label{proof:parameters}
 y=q^6,\qquad x=q^{70}.
\end{equation}
Since $\log q\ge\log3>1$, we have
$\beta\ge1-1/(100\log q)>1/2$. Moreover,
$\varepsilon\log(xy)=76\delta\le76/100<1$,
$y\ge729$, and $y>\max(q,100)$.
Thus Lemma~\ref{real:large-prime} applies.

The squares supply a uniform normalization: by
\eqref{real:positive-normalization},
\begin{equation}\label{proof:square-normalization}
 \begin{aligned}
 H_1(y)&\ge\sum_{m=1}^{27}\frac1{m^2}
 \ge\sum_{m=1}^7\frac1{m^2}+\int_8^{28}\frac{dt}{t^2}\\
 &=\frac{282431}{176400}>\frac85.
 \end{aligned}
\end{equation}
For $k\ge2$, put
\[
 f_k(q)=\left(q+1+\frac k2\log q\right)q^{-k/2}.
\]
Its derivative satisfies
\[
 f_k'(q)=q^{-k/2-1}
 \left[\left(1-\frac k2\right)q-\frac{k^2}{4}\log q\right]<0
 \qquad(q\ge3).
\]
Using \eqref{prelim:numerical} at $q=3$ therefore gives
\begin{align}
 B(y)&\le(4+3\log3)3^{-3}<\frac{73}{270},
 \label{proof:lower-end-error}\\
 B(xy)&\le(4+38\log3)3^{-38}
       <50\cdot3^{-38}<\frac1{10000},
 \label{proof:upper-end-error}\\
 \frac{2B(y)+2y^{-1/2}}{\log q}
 &<\left(\frac{73}{135}+\frac2{27}\right)\frac{12}{13}
   =\frac{332}{585}<\frac47.
 \label{proof:integral-error}
\end{align}
Here $\kappa =2870(\log q)^2$, whence
$\varepsilon \kappa /\log q=2870\delta\le287/10$.
Consequently, \eqref{real:comparison-error} satisfies the rational
certificate
\begin{equation}\label{proof:error-certificate}
 \begin{aligned}
 \frac{\mathcal E (x,y)}{\log q}
 &<\frac{73}{270}\left(\frac{287}{10}+6\right)
     +\frac{76}{10000}+\frac47\\
 &=\frac{1176629}{118125}
   =10-\frac{4621}{118125}<10.
 \end{aligned}
\end{equation}
Dividing by \eqref{proof:square-normalization} now gives
\begin{equation}\label{proof:prime-mass-certificate}
 \frac J{\log q}<\frac{287}{10}+\frac58\cdot10
 =\frac{699}{20}<35.
\end{equation}
By \eqref{real:inert-lower},
\begin{equation}\label{proof:lower-bound}
 S_-(q^{70},\chi)
 >\left(70-6-\frac{35}{2}\right)\log q-2
 =\frac{93}{2}\log q-2.
\end{equation}
On the other hand, Proposition~\ref{arith:main} gives
\begin{equation}\label{proof:upper-bound}
 S_-(q^{70},\chi)
 \le\left(\frac{11}{20}\cdot70+\frac{29}{60}\right)\log q+6
 =\frac{2339}{60}\log q+6.
\end{equation}
Combining these inequalities would imply
$\frac{451}{60}\log q<8$. This is impossible, since
\begin{equation}\label{proof:final-certificate}
 \frac{451}{60}\log q
 \ge\frac{451}{60}\log3
 >\frac{451}{60}\cdot\frac{13}{12}
 =8+\frac{103}{720}>8.
\end{equation}
The theorem follows.
\end{proof}

The same parameters apply to every conductor $q\ge3$.
The explicit arithmetic bound, the uniform paired-tail estimate, and
the logarithmically weighted comparison together supply the stated
constant without separate small-conductor zero computations. The
constant records this elementary calculation; no numerical optimality
is asserted.

\appendix
\section{Verification of the constant \texorpdfstring{$1/64$}{1/64}}
\label{app:constant}

Let $\chi$ be as in Theorem~\ref{thm:main}. Suppose that
$L(\beta,\chi)=0$ for some $\beta\in(0,1)$, and put
$\varepsilon=1-\beta$ and $\delta=\varepsilon\log q\le1/64$.
For $q\ge7$, take $x=q^{46}$ and $y=q^4$.
Then $\beta>1/2$, $\varepsilon\log(xy)=50\delta\le25/32<1$,
and $y\ge\max(q,100)$. Since $\sqrt y=q^2\ge49$, the square
normalization in \eqref{proof:square-normalization} still gives
$H_1(y)>8/5$. The monotonicity calculation used for
\eqref{proof:lower-end-error}, with exponents $4$ and $50$,
and \eqref{prelim:numerical} give
\begin{align*}
 B(y)&\le(8+2\log7)7^{-2}<\frac{17}{70},\\
 B(xy)&\le(8+25\log7)7^{-25}
        <57\cdot7^{-25}<\frac1{10000},\\
 y^{-1/2}&\le\frac1{49}.
\end{align*}
Here $\kappa=1242(\log q)^2$, so
$\varepsilon\kappa/\log q\le621/32$.
Consequently \eqref{real:comparison-error} gives
\begin{equation}\label{app:error-certificate}
 \begin{aligned}
 \frac{\mathcal E(x,y)}{\log q}
 &<\frac{17}{70}\left(\frac{621}{32}+4\right)
   +\frac{50}{10000}
   +\left(\frac{17}{35}+\frac2{49}\right)\frac{18}{35}\\
 &=6-\frac{21863}{548800}<6.
 \end{aligned}
\end{equation}
Thus \eqref{real:prime-mass} yields
$J/\log q<621/32+(5/8)6=741/32$.
Combining Proposition~\ref{arith:main} with
\eqref{real:inert-lower} would give
$(4453/960)\log q<8$, whereas
\[
 \frac{4453}{960}\log q
 >\frac{4453}{960}\cdot\frac{35}{18}
 =8+\frac{3523}{3456}>8.
\]

For $q=3,4$, the period blocks are respectively
$(3n+1)^{-s}-(3n+2)^{-s}>0$ and
$(4n+1)^{-s}-(4n+3)^{-s}>0$ for $n\ge0$ and $s>0$.
For $q=5$, strict convexity of $f(t)=t^{-s}$ gives
\[
 f(5n+1)-f(5n+2)>f(5n+3)-f(5n+4),
\]
so its period blocks are also positive.
Since the character series converge for $s>0$, this proves
$L(s,\chi)>0$ in these three cases. No primitive character has
conductor $6$. This proves the assertion in Remark~\ref{rem:constant}.

\paragraph{AI assistance.}
ChatGPT assisted with the development, organization, calculations,
and drafting of this note.

\end{document}